\newtheorem{theorem}{Theorem}
 \newtheorem{lemma}{Lemma}
\newtheorem{proposition}{Proposition}
 \newtheorem{definition}{Definition}
 \newtheorem{remark}{Remark}
 \newtheorem{assumption}{Assumption}
\newcommand{\BR}{\mathbb{R}}
\newcommand{\BP}{\mathbb{P}}
\newcommand{\BE}{\mathbb{E}}
\newcommand{\BZ}{\mathbb{Z}}
\newcommand{\BN}{\mathbb{N}}
\newcommand{\filt}{\mathscr{F}}
\newcommand{\sfmin}{\mathsf{min}}
\newcommand{\sfmax}{\mathsf{max}}
\let\oldmarginpar\marginpar
\renewcommand\marginpar[1]{\-\oldmarginpar[\raggedleft\footnotesize #1]%
{\raggedright\footnotesize #1}}
\title{Networked Decision Making for Poisson Processes: \\ Application to nuclear detection
\thanks{Manuscript submitted July 20, 2012.}}
\author{Chetan D. Pahlajani, 
		Ioannis Poulakakis 
		and Herbert G. Tanner
\thanks{Chetan D. Pahlajani is with the Department of Mathematical Sciences, University of Delaware, Newark, DE 19716, USA ({\tt chetan@math.udel.edu}).}
\thanks{Ioannis Poulakakis and Herbert G. Tanner are with the Department of Mechanical Engineering, University of Delaware, Newark, DE 19716, USA, ({\tt \{poulakas,btanner\}@udel.edu}).}}
\begin{document}

\acrodef{sprt}[\textsc{sprt}]{Sequential Probability Ratio Test}
\acrodef{sit}[\textsc{sit}]{Single-Interval Test}
\acrodef{snr}[\textsc{snr}]{Signal-to-Noise Ratio}
\acrodef{pi}[\textsc{pi}]{Program Investigator}
\acrodef{copi}[\textsc{c}o-\textsc{pi}]{Associate Program Investigator}
\acrodef{roc}[\textsc{roc}]{Receiver Operating Characteristic}
\acrodef{iid}[i.i.d.]{independent identically distributed}
\acrodef{lrt}[\textsc{lrt}]{likelihood ratio test}
\acrodef{cps}[cps]{counts per second}

\maketitle

\begin{abstract}
This paper addresses a detection problem where several spatially distributed sensors independently observe a time-inhomogeneous stochastic process. The task is to decide between two hypotheses regarding the statistics of the observed process at the end of a fixed time interval. In the proposed method, each of the sensors transmits once to a fusion center a locally processed summary of its information in the form of a likelihood ratio. The fusion center then combines these messages to arrive at an optimal decision in the Neyman-Pearson framework. The approach is motivated by applications arising in the detection of mobile radioactive sources, and offers a pathway toward the development of novel fixed-interval detection algorithms that combine decentralized processing with optimal centralized decision making.
\end{abstract}

\begin{IEEEkeywords}
Decision making, sensor networks, inhomogeneous Poisson processes, nuclear detection. 
\end{IEEEkeywords}


\thispagestyle{plain}
\markboth{C. D. PAHLAJANI,  I. POULAKAKIS, H. G. TANNER}{POISSON DECISION IN SENSOR NETWORKS}

\section{Introduction}\label{S:Introduction}


Decision making is crucial in translating information to action. Human decision makers can be significantly assisted in determining a time-critical plan of action if provided with concise, dependable information. In a variety of applications---particularly those involving spatially and temporally nonuniform processes---the enhanced observational capabilities afforded by sensor networks render such solutions attractive for collecting the information required to arrive at a decision. Yet, the amount of data accumulated by a network of sensors is frequently overwhelming, underlining the need to filter and synthesize the collected data to ease decision making. Our goal in this work is to provide a framework that leverages local sensor-level information processing, to enable accurate fixed-interval decision making in the context of a spatially distributed sensor network which makes binary decisions based on observations of a time-inhomogeneous stochastic process.

Sensor networks---both mobile and static---have been employed in a wide range of applications, including environmental monitoring \cite{Sukhatme-2007-504, Leonard-gliders, Rus-cow}, intruder detection \cite{Howard-2006-446}, area coverage \cite{Cassandras-EJC, ZhongCassandras, CortezBullo-SICON, Stipanovic-cover, SB:NM:10}, source localization \cite{DandachBullo-09, Farrell-plume, optimotaxis, Morelande07}, and mapping of spatially distributed physical quantities \cite{info-surfing,fink-vijay}. Currently, networks of distributed sensors are used to trigger timely responses to a number of natural disasters, such as hurricanes \cite{hurricanes},  earthquakes \cite{oga99} and tsunamis \cite{dozier-tsunami, bernard-tsunami}. In the classical approach to network-based decision making, sensors relay the entirety of their observations to a central processing unit, which analyzes the data and issues a global decision. While this \textit{centralized} approach has the advantage of using all the information available, it does impose significant communication overhead. Alternatively, a \textit{decentralized} decision-making scheme \cite{BKP97, Tsi93, Var, VV97} can be used; in this setting, the sensors process measurement information and transmit a compressed version of it---typically in the form of a message with values in a finite alphabet---to a fusion center, which then provides a decision. Our motivation in this article stems from a class of problems associated with nuclear detection \cite{Ristic-SigProc,NDTW, Priedhorsky, lastline}; more specifically, the detection of illicit radioactive substances in transit. Remarkably, small or shielded quantities of nuclear material are very difficult to detect at a distance, due to the fact that their sensory signature is disguised in naturally occurring background radiation. Yet, the ability to provide fast and accurate decisions in such situations is of paramount importance to public safety and nuclear nonproliferation; see for instance \cite{Priedhorsky, lastline}, which make explicit reference to the need for networks of detectors deployed along transportation routes.

The physical quantities of interest in many applications (including nuclear detection \cite{Priedhorsky,NDTW,lastline}) can be captured by random processes characterized by discrete events that are highly localized in time. Phenomena of this sort can be mathematically modeled and analyzed within the framework of \textit{point processes} \cite{B81, DV1, DV2, SM91}. A realization of a point process is a random sequence of points, each representing the time and/or spatial location of an event. A point process can be characterized in terms of its \textit{intensity} which corresponds to the rate at which events occur. 
Beyond nuclear measurement, typical examples of such processes include customers to and from a service facility in queueing theory \cite{B81, CY01}, electron emission from a photodetector in optical communications systems \cite{GK95}, generation of electrical pulses in neurons \cite{Tuc}, and others. Of special interest in nuclear detection are Poisson processes which provide the natural models describing the emission and measurement of radiation \cite{NDTW,Ristic-SigProc,SVR}.

In regards to point processes, the problem of decision making between two alternative hypotheses (``all clear" versus ``alarm") has been addressed in \cite{BVW-II, B81, Rubin-IEEE-IT-72}; the solution typically involves the computation of a likelihood ratio, whose comparison against a threshold provides the decision. Error probability bounds for such decision problems are studied in \cite{HSS}, while robust decision making (in the presence of modeling uncertainty) is explored in \cite{GP85}. Decision problems with time-inhomogeneous point processes also arise in optical communications \cite{GK95,Ver86a}. In sum, for the classical (single observer) case, decision theory for general point processes is well-understood. However, the realization of these results in a \textit{network} setting requires care. Indeed, the likelihood ratios in \cite{B81,BVW-II} involve intensities computed on the basis of \textit{all} accumulated information,\footnote{Strictly speaking, the intensity is a \textit{conditional} rate at which events occur (conditioned on available information).} necessitating a modicum of caution in a setting such as ours where much of the computation and most of the raw data are decentralized (See Remark \ref{R:Contrast-Stoch-Int}).

The problem of detecting (moving and stationary) radioactive sources using networks of sensors has received a fair bit of attention in the literature. In situations where the parameters (location, trajectory, activity) of the source are unknown, Bayesian methods are frequently used \cite{BMT,Morelande07,NDTW,Ristic-SigProc}, embedding the issue of detection in a parameter estimation problem. While powerful, Bayesian methods for source parameter estimation exhibit computational complexity exponential in the number of parameters estimated, posing challenges for their implementation in real time for networks with more than ten nodes \cite{BMT,NDTW}. An important insight---and one that serves as the starting point for our analysis---is that in many cases of interest, the problem of source localization can be decoupled from the problem of source detection. Indeed, there are improved methods \cite{KMPS-ICRA2012,HLT-ICRA2012,WH-ICRA2012} for tracking the carrier of a potential radioactive source using sensor modalities other than a Geiger counter. Armed with this observation, source detection reduces to the problem of deciding whether the counts observed by a spatially distributed network of radiation sensors correspond solely to background radiation, or whether they also include emission from a radioactive source with \textit{known} parameters. In this setting, \cite{NDTW} explores the \ac{snr} resulting from the combination of data from a network of radiation sensors, 
allowing for spatially varying background rates. The analysis 
is restricted, however, to uniform linear source motion and does not provide a decision test. The costs and benefits of using networked sensors for moving sources, together with a threshold test (based on the total number of recorded counts) are addressed in \cite{stephens}, assuming uniform background and constant geometry between source and sensor.\footnote{Our analysis indicates that the optimal test involves comparing the likelihood ratio against a threshold, rather than the total number of counts.} For the case of a stationary source and correlated sensor measurements, a distributed detection scheme is developed in \cite{SVR} using the theory of copulas. The work in \cite{BMT} studies detection (via Bayesian estimation) for a moving source, but the motion is required to be linear with constant velocity. Detection and parameter estimation for an unknown number of static radioactive point sources are treated in \cite{Morelande07,Ristic-SigProc}. Evidently, the networked detection problem for general source motion with spatially varying background intensity has yet to be studied. 


Motivated by the above, we pose the following problem: a spatially distributed sensor network observes---over a fixed-time interval---a time-inhomogeneous point process which is known \emph{a priori} to be governed by one of two intensities. How should the local information be processed and communicated through the network in order to reach a reliable decision regarding which intensity governs the observed process? With respect to the spectrum of approaches from centralized to decentralized, we take in this paper an intermediate approach that combines the significantly lower communication cost of decentralized processing (not decision) with the enhanced accuracy of centralized decision making. For the case of a vector of Poisson processes whose intensities explicitly depend on time,\footnote{Time-dependence of intensities encodes the relative motion between the source and the sensors.} we develop an optimal---in the Neyman-Pearson sense---decision-making scheme that combines decentralized processing (local processing at each individual sensor) with centralized decision making via a fusion center. In particular, assuming that the relative motion between the suspected source and the sensors is deterministic and known, and that the sensor observations are conditionally independent, our method relies on the sensors communicating processed information in the form of locally-computed likelihood ratios to the fusion center. The fusion center then combines these messages to arrive at a decision, without the need for any additional information such as the location or the raw data of individual sensors. 
As applied to radiation detection, our framework allows us to consider  arbitrary continuous source motion in any number of dimensions allowing for sensor mobility and spatially varying background rates. In relation to sensor networks, the time-inhomogeneity in our problem leads to non-identically distributed sensor observations; this marks a departure from the frequently used \ac{iid} assumption.

The paper is organized as follows. In Section \ref{S:Problem-statement}, we state the problem and our technical assumptions. The main result (Theorem~\ref{T:New-Main-Result}), which indicates how a global likelihood ratio test can be formulated based on local computation of sensor-specific likelihood ratios, is presented in Section \ref{S:Main-Result}. The proof of this result and the supporting technical material are found in Section \ref{S:Proofs}. Based on this analysis, we offer conservative lower and upper bounds on the probabilities of detection and false alarm, respectively, in Section \ref{section:bounds}. Finally, a numerical example of a one-dimensional case of networked nuclear detection is developed in Section \ref{S:Example}, highlighting the benefits of using multiple sensors. The results provided in this paper can be viewed as a building block toward a general decision-making framework that leverages networks of mobile sensor platforms to enhance detection capability in problems that involve time-inhomogeneous point processes.


\section{Problem Statement and Assumptions}\label{S:Problem-statement}

Consider a collection of sensors observing a point process generated by some physical phenomenon. The goal is to decide between two hypotheses regarding the state of the environment. To this end, each sensor communicates a processed version of its observations to a fusion center, which combines all received messages to a binary decision (Fig. \ref{fig:architecture}). With an eye towards applications such as detection of mobile radioactive sources where the point process can be observed only for a limited time, we require here that the decision be made within a fixed time interval. The problem is formulated as a binary hypothesis test based on measurements from an array of $k$ sensors connected in a parallel network architecture. Figure~\ref{fig:architecture} shows a realization of such a network for radiation detection.

\begin{figure}[h!]
\center
\includegraphics[width=0.45\columnwidth]{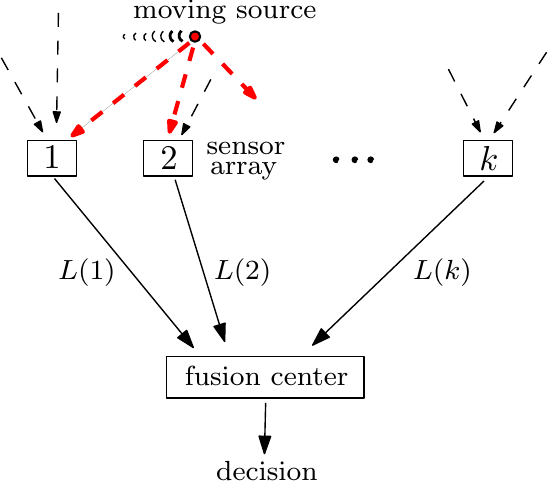}
 \caption{The architecture of the detection scheme as applied to radiation detection.  There is a network of sensors each counting the total number of rays that have  arrived at them.  Thick arrows represent rays emitted from a moving radioactive source and thin arrows mark rays from background radiation. Based on the number and timing of arrival of those counts, each sensor computes a likelihood ratio $L(i)$ which is then transmitted to a fusion center.  The fusion center combines this information to make a decision regarding the presence of the radioactivity in the target moving in front of the sensors.\label{fig:architecture}}
\end{figure}

Let $H_0$ and $H_1$ denote the two hypotheses regarding the state of the environment; for example, the absence or presence of a radioactive source on the moving target. 

\begin{assumption}
The reading at sensor $i$, where $1 \le i \le k$, is a non-decreasing $\BZ^+$-valued piecewise constant, right-continuous function, which increases in steps of size one.  
\end{assumption}

\begin{assumption}
Conditioned on hypothesis $H_j$, $j \in \{0,1\}$, the observations at distinct sensors are independent.  
\end{assumption}

Sensor observations are modeled as inhomogeneous Poisson processes, whose intensities are deterministic functions of time. In particular, for $1 \le i \le k$, sensor $i$ observes a Poisson process, whose time-dependent intensity under $H_0$ is $\beta_i(\cdot)$, while under $H_1$ is $\beta_i(\cdot)+\nu_i(\cdot)$. The explicit dependence of the intensities on time arises from the known motion of the target, which is a potential ``source" of the point process with intensity $\nu_i$. 
Thus sensor observations are independent but not identically distributed.  This is in agreement
with the physics of the motivating application, since a gamma ray emitted from the source
cannot pass through more than one sensor simultaneously (see Fig.~\ref{fig:architecture}),
and the time varying nature of the distance between source and sensor changes the arrival
statistics on the sensor side \cite{NDTW}.

The following assumptions will be imposed on $\beta_i$ and $\nu_i$.

\begin{assumption}\label{A:Process-under-H0}
For $1 \le i \le k$, $\beta_i:[0,T] \to [\beta_\sfmin,\beta_\sfmax]$ is a bounded, continuous function with $0 < \beta_\sfmin < \beta_\sfmax < \infty$, $\beta_\sfmin$, $\beta_\sfmax$ independent of $i \in \{1,2,\dots,k\}$.
\end{assumption}

\begin{assumption}\label{A:Process-under-H1}
For $1 \le i \le k$, $\nu_i:[0,T] \to [\nu_\sfmin,\nu_\sfmax]$ is a bounded, continuous function with $0 < \nu_\sfmin < \nu_\sfmax < \infty$, $\nu_\sfmin$, $\nu_\sfmax$ independent of $i \in \{1,2,\dots,k\}$.
\end{assumption}

The physical significance of these assumptions will become clear in Section \ref{S:Example}.

The detection problem can now be summarized as follows: Suppose that $T > 0$ is the \emph{decision time}; that is, the time by which a decision must be made. Given a single realization of a $k$-dimensional vector of Poisson processes over the time horizon $[0,T]$ (the $k$ components corresponding to the $k$ sensors), decide whether the intensities are given by the collection $\beta_i(\cdot)$ or by the collection $\beta_i(\cdot)+\nu_i(\cdot)$, $1 \le i \le k$.


\section{Main Result}\label{S:Main-Result}


In this section, we state our main result, Theorem \ref{T:New-Main-Result}. For a collection of sensors with a fusion center, configured in a parallel network architecture, Theorem \ref{T:New-Main-Result} gives a procedure for locally processing sensor information 
and transmitting compressed summaries (at a single time) to the fusion center to enable networked decision making that recovers the performance of a centralized scheme. Our starting point is a measurable space $(\Omega,\filt)$,\footnote{Here, $\Omega$ is the sample space and $\filt$ is a $\sigma$-field on $\Omega$. Equipping $(\Omega,\filt)$ with a probability measure $\BP$ gives the probability space $(\Omega,\filt,\BP)$.} on which a $k$-dimensional vector of counting processes $N_t = (N_t(1),\dots,N_t(k))$, $t \in [0,T]$ is defined. In our problem, $N_t(i)$ is the number of counts registered at sensor $i \in \{1,2,\dots,k\}$ up to (and including) time $t \in [0,T]$. The two hypotheses $H_0$ and $H_1$ 
regarding the state of the environment correspond to two distinct probability measures on $(\Omega,\filt)$. Hypothesis $H_0$ corresponds to a probability measure $\BP_0$, with respect to which the $N_t(i)$, $1 \le i \le k$, are independent Poisson processes
over $t \in [0,T]$ with intensities $\beta_i(t)$, respectively. Hypothesis $H_1$ corresponds to a probability measure $\BP_1$, with respect to which the $N_t(i)$, $1 \le i \le k$, are independent Poisson processes
over $t \in [0,T]$ with intensities $\beta_i(t) + \nu_i(t)$, respectively. The decision problem is thus one of identifying the correct probability measure ($\BP_0$ versus $\BP_1$) on $(\Omega,\filt)$ based on a realization of the $k$-dimensional process $N_t = (N_t(1),\dots,N_t(k))$.

We will keep track of the flow of information using the filtration $(\filt^N_t:0 \le t \le T)$ generated by the process $N_t$; here, for $t \in [0,T]$, $\filt^N_t = \sigma(N_s:0 \le s \le t)$ is the smallest $\sigma$-field on $\Omega$ with respect to which all the ($k$-dimensional) random variables $N_s$, $0 \le s \le t$ are measurable. The interpretation is: for any event $A \in \filt^N_t$, an observer of the sample path $s \mapsto N_s$, $0 \le s \le t$, knows at time $t$ whether or not the event $A$ has occurred. The $\sigma$-field $\filt^N_T$ thus represents the information generated by the totality of sensor observations up to $t=T$; to wit, the information on which the decision must be based.

A test for deciding between hypotheses $H_0$ and $H_1$ on the basis of $\filt^N_T$ observations can be thought of as a set $A_1 \in \filt^N_T$ with the following significance: if the outcome $\omega \in A_1$, decide $H_1$; if $\omega \in A_0 \triangleq \Omega\setminus A_1$, decide $H_0$. For a test $A_1 \in \filt^N_T$, two types of errors might occur. A ``false alarm" occurs when the outcome $\omega \in A_1$ (i.e. decide $H_1$) while $H_0$ is the correct hypothesis. A ``miss" occurs when $\omega \in \Omega\setminus A_1$ (i.e. decide $H_0$) while $H_1$ is the correct hypothesis. Clearly, the probability of false alarm is given by $\BP_0(A_1)$, while the probability of a miss is given by $\BP_1(\Omega\setminus A_1)$. Then, the probability of detection is given by $\BP_1(A_1)=1-\BP_1(\Omega\setminus A_1)$. 

In the Neyman-Pearson framework, one is given an acceptable upper bound on the probability of false alarm $\alpha \in (0,1)$, and the problem is to find an optimal test:\footnote{We restrict attention here to tests without randomization; see \cite{Leh, P94}.} a set $A_1^* \in \filt^N_T$ which maximizes the probability of detection over all tests whose probability of false alarm is less than or equal to $\alpha$. The following result provides an optimal test that employs local information processing at the sensor level, to enable decisions at the fusion center that recover the optimal performance of a centralized Neyman-Pearson test. 

\begin{theorem}[Main Result]\label{T:New-Main-Result}
Consider a network with $k$ sensors and a fusion center connected in the parallel configuration of Fig. \ref{fig:architecture}. For $1 \le i \le k$, let $N_t(i)$, $t \in [0,T]$ denote the observation at sensor $i$ over the time interval $[0,T]$ and let $(\tau_n(i):n \ge 1)$ be the jump times of $N_t(i)$. Assume that at decision time $T$, sensor $i$ transmits to the fusion center the statistic
\[
L_T(i) \triangleq \exp\left(-\int_0^T \nu_i(s)ds\right) \prod_{n=1}^{N_T(i)}\left(1+\frac{\nu_i(\tau_n(i))}{\beta_i(\tau_n(i))}\right)
\]
computed on the basis of its observation $t \mapsto N_t(i)$, $t \in [0,T]$.
Then, the test $A_1^* = \{L_T \ge \gamma\}$ performed at the fusion center, with
\[
L_T \triangleq \prod_{i=1}^k L_T(i)
\] 
and $\gamma>0$ satisfying $\BP_0(L_T \ge \gamma)=\alpha$,\footnote{While there may not exist such $\gamma$ for every $\alpha \in (0,1)$, one can always find a sequence $\alpha_n \to 0$ for which there exist $\gamma_n>0$ with $\BP_0(L_T \ge \gamma_n)=\alpha_n$. In other words, arbitrarily small upper bounds on probability of false alarm can be accommodated.} 
is optimal for $\filt^N_T$-observations in the sense that for any $A_1 \in \filt^N_T$ with $\BP_0(A_1) \le \alpha$, we have $\BP_1(A_1^*) \ge \BP_1(A_1)$.
\end{theorem}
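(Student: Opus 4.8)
The plan is to recognize the statement as an instance of the Neyman--Pearson lemma, once one has identified the likelihood ratio between $\BP_0$ and $\BP_1$ on the observation $\sigma$-field $\filt^N_T$ and shown that it equals the product statistic $L_T = \prod_{i=1}^k L_T(i)$. First I would record that, by Assumptions~\ref{A:Process-under-H0} and~\ref{A:Process-under-H1}, each intensity $\beta_i$ and each $\beta_i+\nu_i$ is bounded above and below by strictly positive constants on $[0,T]$; this forces the law of the $i$th observation under $H_1$ to be mutually absolutely continuous with its law under $H_0$ on $\filt^{N(i)}_T$, and hence $\BP_1 \ll \BP_0$ on $\filt^N_T$, so that a Radon--Nikodym derivative $d\BP_1/d\BP_0$ on $\filt^N_T$ exists.

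The computational core is the identification of the per-sensor likelihood ratio. For a single inhomogeneous Poisson process observed on $[0,T]$, with intensity $\lambda_0(\cdot)$ under one measure and $\lambda_1(\cdot)$ under another (both bounded away from $0$ and $\infty$), the Radon--Nikodym derivative on the $\sigma$-field generated by the path up to $T$ is
\[
\exp\!\left(\int_0^T \bigl(\lambda_0(s) - \lambda_1(s)\bigr)\,ds\right)\prod_{n:\,\tau_n \le T}\frac{\lambda_1(\tau_n)}{\lambda_0(\tau_n)} .
\]
I would establish this either by the stochastic-exponential route --- define $L_t(i)$ as the Dol\'eans--Dade exponential solving $dL_t(i) = L_{t^-}(i)\,(\nu_i(t)/\beta_i(t))\,(dN_t(i) - \beta_i(t)\,dt)$, use boundedness of $\nu_i/\beta_i$ and of $\beta_i$ to verify that $(L_t(i))_{t\in[0,T]}$ is a genuine $\BP_0$-martingale with $L_0(i)=1$, set $d\mathbb{Q}_i/d\BP_0 = L_T(i)$, and confirm by computing the $\mathbb{Q}_i$-compensator (or the Laplace functional) of $N_\cdot(i)$ that under $\mathbb{Q}_i$ it is a Poisson process with intensity $\beta_i+\nu_i$, so that $\mathbb{Q}_i = \BP_1$ on $\filt^{N(i)}_T$ --- or, more elementarily, by conditioning on $\{N_T(i)=n\}$, using that given the count the jump times are the order statistics of $n$ i.i.d.\ variables with density proportional to the intensity, writing both path laws as densities against a common unit-rate reference Poisson law, and taking their ratio. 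Either way, substituting $\lambda_0 = \beta_i$ and $\lambda_1 = \beta_i+\nu_i$ collapses the formula to exactly the advertised $L_T(i)$.

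Next I would assemble the joint ratio and conclude. Since $\filt^N_T = \bigvee_{i=1}^k \filt^{N(i)}_T$ and the component processes are independent under both $\BP_0$ and $\BP_1$, the restriction of each $\BP_j$ to $\filt^N_T$ is the product of its one-sensor marginals, whence $d\BP_1/d\BP_0\big|_{\filt^N_T} = \prod_{i=1}^k L_T(i) = L_T$. Each factor $L_T(i)$ is $\filt^{N(i)}_T$-measurable, i.e.\ computable by sensor $i$ from its own sample path alone --- which is what licenses the decentralized-processing interpretation. Finally, applying the non-randomized Neyman--Pearson lemma to the pair $(\BP_0,\BP_1)$ on $\filt^N_T$: for any $A_1 \in \filt^N_T$ with $\BP_0(A_1)\le\alpha$, the test $\{d\BP_1/d\BP_0 \ge \gamma\}$, with $\gamma$ chosen so that its $\BP_0$-probability is exactly $\alpha$, maximizes $\BP_1(A_1)$. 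This is the one place the standing hypothesis that such a $\gamma$ exists is used --- it is what makes randomization unnecessary. Substituting $d\BP_1/d\BP_0 = L_T$ identifies the optimal test as $A_1^* = \{L_T \ge \gamma\}$ and yields $\BP_1(A_1^*) \ge \BP_1(A_1)$.

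I expect the main obstacle to be the second step: rigorously proving that the pathwise statistic $L_T(i)$ is genuinely the Radon--Nikodym derivative on $\filt^{N(i)}_T$, and not merely a natural candidate. The martingale route requires a real argument (using the bounded deterministic intensity) to guarantee the true martingale property of the stochastic exponential and to extend the change of measure from finite sub-horizons to the full interval $[0,T]$, together with a compensator/Laplace-functional computation to certify the intensity under the new measure; the elementary route requires care in fixing a common dominating measure on path space and in handling the conditioning on the random number of jumps. By contrast, the absolute continuity (from the intensity bounds), the product factorization (from conditional independence of the sensors), and the appeal to Neyman--Pearson are all routine.
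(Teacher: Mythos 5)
Your proposal is correct and follows essentially the same route as the paper: the first option you describe (the stochastic-exponential/change-of-measure route) is exactly what the paper does, establishing the martingale property of $L_t$ from its stochastic-integral representation and the boundedness of $\beta_i$ and $\nu_i$, defining $d\BP_1/d\BP_0 = L_T$, verifying the new intensities via Br\'emaud's Theorem VI.2.T3 together with the Watanabe characterization, and then invoking the Neyman--Pearson lemma on $\filt^N_T$. The only cosmetic differences are that the paper treats the $k$-variate process jointly rather than sensor-by-sensor, and inserts a time-rescaling lemma because the cited theorem is stated on $[0,1]$.
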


Before continuing with the proof of Theorem \ref{T:New-Main-Result}, we highlight
why optimal decision making through decentralized processing
of information at the sensor level is possible in the case considered here.


\begin{remark}\label{R:Main-Result-Significance}
The decision test $\{L_T \ge \gamma\}$ is optimal (in the Neyman-Pearson sense) for $\filt^N_T$-observations, the latter comprising the totality of information in the $k$ waveforms $t \mapsto N_t(i)$, $t \in [0,T]$. Equivalently, if one were to consider a centralized framework (where information is continuously streamed from each of the sensors to the fusion center), $\{L_T \ge \gamma\}$ would be the optimal test. Note, however, that each $L_T(i)$ in the product $L_T = \prod_{i=1}^k L_T(i)$ can be computed locally at sensor $i$ without any knowledge of the measurements at other sensors (see also Remark \ref{R:Contrast-Stoch-Int}). Indeed, computation of $L_T(i)$  requires knowledge solely of the times $(\tau_n(i):n \ge 1)$ at which counts have been recorded at sensor $i$, and the quantities $\beta_i(t)$, $\nu_i(t)$, which are deterministic. Consequently, each sensor simply needs to transmit its locally computed $L_T(i)$ to the fusion center at (the single) time $t=T$ (in lieu of the element $t \mapsto N_t(i)$, $t \in [0,T]$ of function space). The fusion center then forms the product $L_T = \prod_{i=1}^k L_T(i)$ which is compared against $\gamma$ to arrive at a decision. We thus retain the accuracy of centralized decision making while decentralizing most of the data processing, thereby accruing significant savings in communication costs.
\end{remark}


\section{Proof of Main Result}\label{S:Proofs}

The contents of this section are organized as follows. In Section \ref{SS:Definitions}, we provide precise definitions for various quantities of interest. In Section \ref{SS:Lemmas}, we state and prove some results needed for the proof of Theorem \ref{T:New-Main-Result}. In Section \ref{SS:Proof-Main}, the proof of Theorem \ref{T:New-Main-Result} is completed.

\subsection{Definitions}\label{SS:Definitions}
Let $(\Omega,\filt,\BP)$ be a probability space. The \textit{sample space} $\Omega$ is the set of all possible outcomes $\omega$ of a random experiment, $\filt$ is the $\sigma$-field (or $\sigma$-algebra) of \textit{events}, and $\BP$ is a probability measure. A \textit{point process} on $[0,\infty)$ can be described by a sequence $(\tau_n:n \ge 0)$ of random variables defined on $(\Omega,\filt,\BP)$ taking values in $[0,\infty]$ such that
\begin{eqnarray}
\tau_0  & \equiv  & 0\enspace,\\
\tau_n  & < & \infty \hspace{.2in} \Rightarrow \hspace{.2in} \tau_n < \tau_{n+1}\enspace.
\end{eqnarray}
Here, $\tau_n$ denotes the (random) time of the $n$-th occurrence of an event (such as a radiation counter registering a count). Associated to the sequence $(\tau_n:n \ge 0)$ is the stochastic process $(N_t:t \ge 0)$ defined by
\begin{equation}
N_t  \triangleq \sum_{n \ge 1} 1_{(\tau_n \le t)}\enspace,
\end{equation}
where $1_A$ denotes the indicator function of $A$, i.e.
\[
1_A(\omega) \triangleq \begin{cases}1 \quad &\text{if $\omega \in A$,}\\0 \quad &\text{else.}\end{cases}
\]
Thus, $N_t$ counts the number of occurrences of the phenomenon prior to or at time $t$. $N_t$ is a \textit{counting process}, i.e. $N_t$ is a $\BZ^+$-valued process with $N_0=0$ such that the sample paths $t \mapsto N_t$ are non-decreasing, piecewise constant, right-continuous functions of $t$ which increase in steps of size 1. We will also refer to $N_t$ as a point process \cite{B81}. 

A \textit{filtration} $(\filt_t:t \ge 0)$ is an increasing family of sub-$\sigma$-fields of $\filt$, i.e. $\filt_t \subset \filt$ for all $t$, and $s \le t$ implies $\filt_s \subset \filt_t$. The $\sigma$-field $\filt_t$ represents the information available at time $t$. A stochastic process $(N_t:t \ge 0)$ taking values in $\BR^k$ is said to be \textit{adapted} to the filtration $\filt_t$ if for all $t \ge 0$, $N_t$ is $\filt_t$-measurable; i.e. for any Borel measurable subset $B$ of $\BR^k$, the event $\{N_t \in B\} \in \filt_t$. For $t \ge 0$, let $\filt^N_t = \sigma(N_s:0 \le s \le t)$ be the smallest $\sigma$-field on $\Omega$ with respect to which all the random variables $N_s$, $0 \le s \le t$ are measurable. Then, $(\filt^N_t:t \ge 0)$ is the filtration generated by the process $N_t$ and corresponds to the information available to an observer of the process $N_t$. Clearly, if $N_t$ is $\filt_t$-adapted, then $\filt^N_t \subset \filt_t$ for all $t$. Next, let us make precise what we mean by an inhomogeneous Poisson process.

\begin{definition}[Inhomogeneous Poisson process]\label{D:Poisson-process}
Suppose $\lambda(t)$ is a nonnegative, measurable function such that $\int_0^t \lambda(s)ds < \infty$ for all $t>0$. A point process $N_t$ on a probability space $(\Omega,\filt,\BP)$ adapted to the filtration $(\filt_t:t \ge 0)$ is said to be a $(\BP,\filt_t)$-Poisson process with \textit{intensity} $\lambda(t)$ if for $0 \le s \le t$,
\begin{enumerate}
\item $N_t - N_s$ is independent of $\filt_s$, and
\item $N_t - N_s$ is a Poisson random variable with parameter $\int_s^t \lambda(u)du$, i.e. for all $n \in \BZ^+$,
\begin{equation}
\BP(N_t - N_s = n) = e^{-\int_s^t \lambda(\tau)d\tau}\frac{\left(\int_s^t \lambda(\tau)d\tau\right)^n}{n!}\enspace .
\end{equation}
\end{enumerate}
\end{definition}

\subsection{Useful Results}\label{SS:Lemmas}

The primary result in this section is Proposition \ref{P:Math-setting}, which provides the probabilistic setup for the statement and proof (given in the next section) of Theorem \ref{T:New-Main-Result}. Our development proceeds through the following steps. We start with Proposition \ref{P:VI.2.T3}---a result of Br\'emaud \cite{B81} which plays a fundamental role in our analysis. To apply Proposition \ref{P:VI.2.T3}, we first relate our problem---posed on the time interval $[0,T]$---to a corresponding problem over the time interval $[0,1]$, as in Proposition \ref{P:VI.2.T3}. This is accomplished by  a time-rescaling argument (see proof of Proposition \ref{P:Math-setting}). Lemma \ref{L:Poisson-rescaling-time} aids us in this regard by describing how the intensity of a Poisson process transforms under a change of time. Next, we verify in Lemma \ref{L:L_t-martingale} that condition (\ref{E:LRatio-mean-one}) in the statement of Proposition \ref{P:VI.2.T3} holds. It is important to note that Proposition \ref{P:VI.2.T3} and Lemmas \ref{L:Poisson-rescaling-time} and \ref{L:NP-Lemma-B81} are formulated in terms of a general probability space, not necessarily identical to the one in Proposition \ref{P:Math-setting} (which supports our processes of interest). Lemma \ref{L:L_t-martingale}, on the other hand, pertains to the \textit{specific} setup of Proposition \ref{P:Math-setting}. Finally, Lemma \ref{L:NP-Lemma-B81} recalls the Neyman-Pearson Lemma, which is used in the next section in the proof of Theorem \ref{T:New-Main-Result}. 

In allowing stochastic intensities, as in Proposition \ref{P:VI.2.T3} below, one has to have the technical requirement of \textit{predictability} \cite[Section I.3]{B81}. In our problem, however, the intensities are deterministic and automatically predictable. In terms of notation, we follow the convention of using $\lambda_t$ for a stochastic intensity versus $\lambda(t)$ for a deterministic one.

\begin{proposition}[Theorem VI.2.T3, \cite{B81}]\label{P:VI.2.T3}
Let $(N_t(1),\dots,N_t(k))$ be a $k$-variate point process adapted to the filtration $\filt_t$ on the given probability space $(\Omega,\filt,\BP_0)$, and let $\lambda_t(i)$, $1 \le i \le k$, be the predictable $(\BP_0,\filt_t)$-intensities of $N_t(i)$, $1 \le i \le k$, respectively. Let $\mu_t(i)$, $1 \le i \le k$, be nonnegative, $\filt_t$-predictable processes such that for all $t \ge 0$, $1 \le i \le k$,
\begin{equation}
\int_0^t \mu_s(i) \lambda_s(i)ds < \infty \qquad \BP_0-a.s.
\end{equation}
Let $(\tau_n(i):n \ge 1)$ denote the jump times of $N_t(i)$, $1 \le i \le k$, and define the process $L_t$ by
\begin{equation}
L_t = \prod_{i=1}^k L_t(i)\enspace,
\end{equation}
where
\begin{equation}
L_t(i) = \left(\prod_{n=1}^{N_t(i)} \mu_{\tau_n(i)}(i)\right) \exp\left\{\int_0^t (1-\mu_s(i))\lambda_s(i)ds\right\}\enspace,
\end{equation}
with the convention that $\prod_{n=1}^0 (\dots)=1$. Suppose moreover that
\begin{equation}\label{E:LRatio-mean-one}
\BE_0[L_1]=1 \enspace .
\end{equation}
Define the probability measure $\BP_1$ on $(\Omega,\filt)$ by
\begin{equation}
\frac{d\BP_1}{d\BP_0} = L_1 \enspace .
\end{equation}
Then, for each $1 \le i \le k$, $N_t(i)$ has the $(\BP_1,\filt_t)$-intensity $\tilde{\lambda}_t(i)=\mu_t(i)\lambda_t(i)$ over $[0,1]$.
\end{proposition}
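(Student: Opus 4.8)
The plan is to prove the statement by the standard change-of-measure (Girsanov) argument for point processes, working with the martingale characterization of stochastic intensity: a nonnegative predictable process $\tilde{\lambda}_t(i)$ is the $(\BP_1,\filt_t)$-intensity of $N_t(i)$ if and only if the compensated process
\[
\tilde{M}_t(i) \triangleq N_t(i) - \int_0^t \mu_s(i)\lambda_s(i)\,ds
\]
is a $(\BP_1,\filt_t)$-local martingale. Since $\mu_t(i)$ and $\lambda_t(i)$ are nonnegative and predictable, so is their product, and only this martingale property remains. The key reduction is the abstract Bayes lemma: if $L_t$ is a genuine $(\BP_0,\filt_t)$-martingale with $L_1$ the density of $\BP_1$, then a process $X_t$ is a $(\BP_1,\filt_t)$-local martingale if and only if $L_t X_t$ is a $(\BP_0,\filt_t)$-local martingale. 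Thus the whole problem reduces to showing that $L_t\,\tilde{M}_t(i)$ is a $\BP_0$-local martingale, once we know $L_t$ is a true $\BP_0$-martingale on $[0,1]$.

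First I would establish the martingale property of $L_t$. Writing $M_t(i) \triangleq N_t(i) - \int_0^t \lambda_s(i)\,ds$ for the $\BP_0$-compensated martingale, a direct integration by parts---splitting $L_t(i)$ into its continuous finite-variation factor $\exp\{\int_0^t(1-\mu_s(i))\lambda_s(i)\,ds\}$ and its pure-jump factor $\prod_{n=1}^{N_t(i)}\mu_{\tau_n(i)}(i)$---yields the stochastic differential
\[
dL_t(i) = L_{t-}(i)\,(\mu_t(i)-1)\,dM_t(i),
\]
identifying each $L_t(i)$ as a Dol\'eans--Dade exponential, hence a $\BP_0$-local martingale. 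Because the components of a multivariate point process have no common jumps, the cross-variations $[L(i),L(j)]$ vanish for $i \ne j$, and the product rule gives $dL_t = \sum_{i=1}^k L_{t-}(\mu_t(i)-1)\,dM_t(i)$, so $L_t$ is itself a nonnegative $\BP_0$-local martingale. Being nonnegative, it is a supermartingale; the hypothesis $\BE_0[L_1]=1$ forces constant expectation on $[0,1]$, which upgrades it to a genuine $(\BP_0,\filt_t)$-martingale. This in turn makes $L_t$ the density of the restriction of $\BP_1$ to $\filt_t$, legitimizing the Bayes lemma above.

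The heart of the argument is the computation of $d(L_t\tilde{M}_t(i))$. Applying integration by parts, the jump of $\tilde{M}(i)$ is $+1$ at each $\tau_n(i)$ while the corresponding jump of $L$ is $L_{t-}(\mu_t(i)-1)$ (only the $i$-th factor jumps, again by the no-common-jumps property), so the cross-variation contributes $\int_0^t L_{s-}(\mu_s(i)-1)\,dN_s(i)$. Collecting the $dN_t(i)$ and $dt$ terms, the finite-variation drift cancels exactly, leaving
\[
d(L_t\tilde{M}_t(i)) = \tilde{M}_{t-}(i)\,dL_t + L_{t-}\,\mu_t(i)\,dM_t(i),
\]
both terms being stochastic integrals against $\BP_0$-martingales, hence a $\BP_0$-local martingale. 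By the Bayes lemma, $\tilde{M}_t(i)$ is a $(\BP_1,\filt_t)$-local martingale, which is precisely the assertion that $\tilde{\lambda}_t(i)=\mu_t(i)\lambda_t(i)$ is the $(\BP_1,\filt_t)$-intensity of $N_t(i)$ over $[0,1]$.

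I expect the main obstacle to be the localization bookkeeping rather than any single identity. The processes $L_t$ and $\tilde{M}_t(i)$ are a priori only local martingales, so the product-rule manipulations and the Bayes lemma must be carried out along a common localizing sequence of stopping times, and one must verify that integrals such as $\int_0^t L_{s-}\mu_s(i)\,dM_s(i)$ are well defined---this is where the integrability hypothesis $\int_0^t\mu_s(i)\lambda_s(i)\,ds<\infty$ $\BP_0$-a.s.\ is used. The other delicate point is the passage from local to true martingale for $L_t$: the supermartingale-with-constant-expectation argument is clean, but establishing uniform integrability on $[0,1]$---so that $L_t=\BE_0[L_1\mid\filt_t]$ genuinely holds---is the step that makes the entire change of measure rigorous.
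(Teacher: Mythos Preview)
The paper does not prove this proposition; it is quoted as Theorem~VI.2.T3 of Br\'emaud \cite{B81} and used as a black box in the proof of Proposition~\ref{P:Math-setting}. Your outline is precisely the standard Girsanov argument for point processes found in \cite{B81}: the exponential-martingale representation $dL_t=\sum_i L_{t-}(\mu_t(i)-1)\,dM_t(i)$ (which the paper itself cites as \cite[Equation~VI.2.4]{B81} in the proof of Lemma~\ref{L:L_t-martingale}), the supermartingale-with-constant-mean upgrade via $\BE_0[L_1]=1$, and the product-rule computation showing $L_t\tilde M_t(i)$ is a $\BP_0$-local martingale so that the abstract Bayes lemma applies. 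The argument is correct; just note that the ``no common jumps'' property you invoke is part of Br\'emaud's \emph{definition} of a multivariate point process rather than a derived fact, so it should be cited as such.
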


To underscore the role that deterministic intensities play in allowing decentralized processing in our problem, we briefly discuss some of the subtleties that arise in networked detection of point processes with stochastic intensities.

\begin{remark}\label{R:Contrast-Stoch-Int}
In Proposition \ref{P:VI.2.T3}, the assumption of $\filt_t$-predictability of $\lambda_t(i)$, $\mu_t(i)$ (which are in general stochastic) implies that the latter potentially depend on all the information in $\filt_t$, which includes the information generated by \textit{all} the sample paths $s \mapsto N_s(i)$, $s \in [0,t]$, $1 \le i \le k$.\footnote{$\filt_t$ may contain information generated by other random quantities too.} Consequently, for a sensor network which observes a point process with stochastic intensities, computation of the $L_t(i)$'s cannot be decentralized as described in Remark \ref{R:Main-Result-Significance} without incorporating a filtering component \cite[Section VI.4]{B81}---finding the best estimates $\hat{L}_t(i)$ of $L_t(i)$ based on the locally available information $\filt^{N(i)}_t \triangleq \sigma(N_s(i):0 \le s \le t)$. In such a decentralized processing scheme, the likelihood ratio to be compared to a threshold (at the fusion center) would be the process $\hat{L}_t \triangleq \prod_{i=1}^k \hat{L}_t(i)$ evaluated at the decision time. This would entail, in general, some loss of performance in comparison to a fully centralized scheme. These extra considerations do not arise in our detection problem since our analysis focuses on a problem with deterministic intensities.
\end{remark}

Next, we state Lemmas \ref{L:L_t-martingale} and \ref{L:Poisson-rescaling-time}, which together verify \eqref{E:LRatio-mean-one} for our problem. The proofs of these Lemmas are given in the Appendix. Lemma \ref{L:L_t-martingale} shows that the process $(L_t:t \in [0,T])$ defined by \eqref{E:LRatio-specific}--\eqref{E:LRatio-i-specific} is a martingale with mean one. Lemma \ref{L:Poisson-rescaling-time} describes how the intensity of a Poisson process transforms under a time rescaling. Taken together, these lemmas enable us in Proposition \ref{P:Math-setting} to obtain the probability measure $\BP_1$ on $(\Omega,\filt)$ (corresponding to hypothesis $H_1$). In the sequel, we denote the expectations corresponding to probability measures $\BP_0$ and $\BP_1$ by $\BE_0$ and $\BE_1$, respectively. 

\begin{lemma}\label{L:L_t-martingale}
Under Assumptions \ref{A:Process-under-H0}, \ref{A:Process-under-H1}, the process $(L_t:t \in [0,T])$ defined by
\begin{equation}\label{E:LRatio-specific}
L_t \triangleq \prod_{i=1}^k L_t(i)
\end{equation}
where
\begin{equation}\label{E:LRatio-i-specific}
L_t(i) \triangleq \exp\left(-\int_0^t \nu_i(s)ds\right) \prod_{n=1}^{N_t(i)}\left(1+\frac{\nu_i(\tau_n(i))}{\beta_i(\tau_n(i))}\right) \enspace ,
\end{equation}
is a nonnegative $(\BP_0,\filt^N_t)$-martingale. By $(\BP_0,\filt^N_t)$-martingale, we mean that  
\begin{enumerate}
\item $L_t$ is adapted to the filtration $\filt^N_t$,
\item $\BE_0 [|L_t|]<\infty$ for all $t \in [0,T]$, 
\item For $0 \le s \le t \le T$, $\BE_0[L_t|\filt^N_s]=L_s$, $\BP_0$-a.s. 
\end{enumerate}
Thus, $L_t$ has constant mean, i.e. $\BE_0[L_t]=\BE_0[L_0]=1$ for all $t \in [0,T]$.
\end{lemma}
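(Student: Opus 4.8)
The plan is to prove the martingale property for each factor $L_t(i)$ separately and then invoke conditional independence (Assumption 2) to conclude that the product $L_t = \prod_{i=1}^k L_t(i)$ is a martingale. Since the processes $N_t(i)$ are independent under $\BP_0$ and each $L_t(i)$ is a functional of $N(i)$ alone, adaptedness to $\filt^N_t$ is immediate, and $\BE_0[L_t] = \prod_{i=1}^k \BE_0[L_t(i)]$, so it suffices to show each $L_t(i)$ is a nonnegative $(\BP_0,\filt^{N(i)}_t)$-martingale with mean one; the factorization then upgrades this to a $(\BP_0,\filt^N_t)$-martingale with mean one for the product. The nonnegativity of $L_t(i)$ is clear since $\beta_i, \nu_i > 0$ by Assumptions \ref{A:Process-under-H0} and \ref{A:Process-under-H1}, so each factor $1 + \nu_i(\tau_n(i))/\beta_i(\tau_n(i))$ is positive and the exponential prefactor is positive.

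First I would recognize $L_t(i)$ as an instance of the exponential (Dol\'eans--Dade) formula for point processes: with $\mu_s(i) \triangleq 1 + \nu_i(s)/\beta_i(s)$ and $\lambda_s(i) \triangleq \beta_i(s)$, one has $(1-\mu_s(i))\lambda_s(i) = -\nu_i(s)$, so
\[
L_t(i) = \left(\prod_{n=1}^{N_t(i)} \mu_{\tau_n(i)}(i)\right)\exp\left\{\int_0^t (1-\mu_s(i))\lambda_s(i)\,ds\right\},
\]
matching the form in Proposition \ref{P:VI.2.T3}. The standard route is to show that $L_t(i)$ satisfies the SDE $dL_t(i) = L_{t^-}(i)(\mu_t(i)-1)(dN_t(i) - \beta_i(t)\,dt)$, i.e. that $L_t(i) = 1 + \int_0^t L_{s^-}(i)(\mu_s(i)-1)\,dM_s(i)$ where $M_t(i) \triangleq N_t(i) - \int_0^t \beta_i(s)\,ds$ is the compensated (hence $(\BP_0,\filt^{N(i)}_t)$-martingale) process. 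This identity follows by a direct computation: between jumps $L_t(i)$ evolves as $dL_t(i) = -\nu_i(t)L_t(i)\,dt$, while at a jump time $\tau_n(i)$ it is multiplied by $\mu_{\tau_n(i)}(i)$, so $\Delta L_{\tau_n(i)}(i) = L_{\tau_n(i)^-}(i)(\mu_{\tau_n(i)}(i)-1)$; assembling the continuous and jump parts gives the stated stochastic integral representation. An integrability estimate — using the boundedness of $\beta_i$, $\nu_i$ on $[0,T]$ from Assumptions \ref{A:Process-under-H0}, \ref{A:Process-under-H1} and the fact that $N_T(i)$ has finite exponential moments (Poisson with finite mean) — shows $\BE_0\!\left[\int_0^T L_{s^-}(i)^2(\mu_s(i)-1)^2\beta_i(s)\,ds\right] < \infty$, or more simply $\BE_0[\sup_{t\le T}|L_t(i)|] < \infty$, which promotes the local martingale $\int_0^{\cdot} L_{s^-}(i)(\mu_s(i)-1)\,dM_s(i)$ to a genuine martingale. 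Hence $\BE_0[L_t(i)\mid\filt^{N(i)}_s] = L_s(i)$ and in particular $\BE_0[L_t(i)] = L_0(i) = 1$.

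The main obstacle is the integrability/true-martingale verification: a priori the exponential point-process local martingale need not be a true martingale, and one must genuinely use the hypotheses' uniform bounds $\beta_i \le \beta_\sfmax$, $\nu_i \le \nu_\sfmax$ on the compact interval $[0,T]$. The cleanest argument bounds $L_t(i)$ pathwise by $\exp\!\big((\nu_\sfmax/\beta_\sfmin)N_T(i)\big)$ times a deterministic constant (using $1 + x \le e^x$ and $\int_0^t \nu_i \le \nu_\sfmax T$), and then observes that $\BE_0\!\left[\exp(c\,N_T(i))\right] = \exp\!\big((e^c-1)\int_0^T\beta_i(s)\,ds\big) < \infty$ for any $c$, giving a uniform $L^1$ (indeed $L^p$) bound that justifies interchanging limits and confirms the true-martingale property. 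With each $L_t(i)$ a mean-one $(\BP_0,\filt^{N(i)}_t)$-martingale, conditional independence across sensors gives, for $0 \le s \le t \le T$,
\[
\BE_0[L_t \mid \filt^N_s] = \BE_0\!\left[\prod_{i=1}^k L_t(i)\,\Big|\,\filt^N_s\right] = \prod_{i=1}^k \BE_0[L_t(i)\mid\filt^{N(i)}_s] = \prod_{i=1}^k L_s(i) = L_s,
\]
where the middle equality uses that the $N(i)$ are independent under $\BP_0$ so the conditional expectation of the product factors into the product of conditional expectations w.r.t. the individual filtrations. Taking $s=0$ yields $\BE_0[L_t] = 1$ for all $t \in [0,T]$, completing the proof.
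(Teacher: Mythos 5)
Your proof is correct, but it is organized differently from the paper's. The paper works with the full product $L_t$ at once: it invokes the $k$-variate representation $L_t = 1+\sum_{i=1}^k\int_0^t L_{s-}\,(\nu_i(s)/\beta_i(s))\,dM_s(i)$ (Br\'emaud's Equation VI.2.4) and shows each of the $k$ stochastic integrals is a true $(\BP_0,\filt^N_t)$-martingale via the integrability criterion of \cite[Theorem II.3.T8]{B81}, i.e.\ by checking $\BE_0\left[\int_0^t L_{s-}\nu_i(s)\,ds\right]<\infty$ using the same pathwise bound $L_t\le\prod_{i=1}^k K^{N_t(i)}$ with $K=1+\nu_\sfmax/\beta_\sfmin$ and the same Poisson exponential-moment computation that you use. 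You instead treat each channel separately---establishing that $L_t(i)$ is a mean-one $(\BP_0,\filt^{N(i)}_t)$-martingale from the one-dimensional exponential SDE---and then upgrade the product via the factorization $\BE_0[\prod_i L_t(i)\mid\filt^N_s]=\prod_i\BE_0[L_t(i)\mid\filt^{N(i)}_s]$, which is valid because the $\sigma$-fields $\filt^{N(i)}_T$ are independent under $\BP_0$ and each $L_t(i)$ is measurable with respect to its own. Your route buys a cleaner domination step (for a scalar factor, $\BE_0[\sup_{t\le T}L_t(i)]\le\BE_0[K^{N_T(i)}]<\infty$ already promotes the local martingale to a true one) at the cost of the product-of-conditional-expectations lemma, which you should state and justify explicitly (a $\pi$-system argument over rectangles $\cap_i A_i$ with $A_i\in\filt^{N(i)}_s$, noting $\filt^N_s=\sigma(\filt^{N(1)}_s,\dots,\filt^{N(k)}_s)$); the paper's route avoids that lemma but leans on the multivariate machinery of \cite{B81}. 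Both arguments rest on the same two pillars---the compensated-integral (Dol\'eans--Dade) representation of $L$ and the exponential-moment bound supplied by Assumptions \ref{A:Process-under-H0} and \ref{A:Process-under-H1}---so either is acceptable.
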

\begin{proof} See Appendix.
\end{proof}

Suppose $(\Omega,\filt,\BP)$ is a probability space equipped with the filtration $(\filt_t:t \ge 0)$. Let $\lambda(t)$ be a nonnegative, measurable function defined on $[0,\infty)$ with $\int_0^t \lambda(s)ds < \infty$ for all $t>0$. Let $(X_t:t \ge 0)$ be a $(\BP,\filt_t)$-Poisson process with intensity $\lambda(t)$ (see Definition \ref{D:Poisson-process}). 

\begin{lemma}\label{L:Poisson-rescaling-time}
Fix $T>0$. Let $u = t/T$. Let $Y_u = X_t = X_{T\cdot u}$, $\mathscr{G}_u = \filt_t = \filt_{T\cdot u}$ for $u \ge 0$. Define $\tilde{\lambda}(u)$ on $[0,\infty)$ by
\begin{equation}
\tilde{\lambda}(u) = T \lambda(T\cdot u) 
\end{equation}
for $u \ge 0$. Then, $(Y_u:u \ge 0)$ is a $(\BP,\mathscr{G}_u)$-Poisson process with intensity $\tilde{\lambda}(u)$.
\end{lemma}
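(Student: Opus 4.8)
The plan is to verify the two defining properties of an inhomogeneous Poisson process from Definition \ref{D:Poisson-process} for the rescaled process $(Y_u : u \ge 0)$ with respect to the rescaled filtration $(\scrG_u : u \ge 0)$, working directly from the corresponding properties of $(X_t : t \ge 0)$. The first thing I would note is that the time change $t = T \cdot u$ is a strictly increasing, continuous bijection of $[0,\infty)$ onto itself, so all set-theoretic and measurability bookkeeping transports cleanly: since $Y_u = X_{Tu}$ is $\filt_{Tu}$-measurable and $\scrG_u = \filt_{Tu}$, the process $Y_u$ is $\scrG_u$-adapted; and since $s \le t$ implies $\filt_s \subset \filt_t$, the family $(\scrG_u) = (\filt_{Tu})$ is again increasing, hence a genuine filtration. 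One should also check that the rescaled intensity $\tilde\lambda(u) = T\lambda(Tu)$ is nonnegative and measurable (immediate, as a composition/scaling of such functions) and satisfies the finiteness condition $\int_0^u \tilde\lambda(v)\,dv < \infty$; this is just the substitution $v = t/T$, which gives $\int_0^u T\lambda(Tv)\,dv = \int_0^{Tu} \lambda(t)\,dt < \infty$ by hypothesis on $\lambda$.

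Next I would verify the independence property. Fix $0 \le s' \le u'$ and set $s = Ts'$, $t = Tu'$. Then $Y_{u'} - Y_{s'} = X_t - X_s$, and by property (1) for $X$, the increment $X_t - X_s$ is independent of $\filt_s = \filt_{Ts'} = \scrG_{s'}$. Hence $Y_{u'} - Y_{s'}$ is independent of $\scrG_{s'}$, which is exactly property (1) for $Y$. For property (2), the same identification gives $Y_{u'} - Y_{s'} = X_t - X_s$, which by hypothesis is Poisson with parameter $\int_s^t \lambda(\tau)\,d\tau = \int_{Ts'}^{Tu'}\lambda(\tau)\,d\tau$. Applying the substitution $\tau = Tv$ (so $d\tau = T\,dv$) converts this to $\int_{s'}^{u'} T\lambda(Tv)\,dv = \int_{s'}^{u'}\tilde\lambda(v)\,dv$, so $Y_{u'} - Y_{s'}$ is Poisson with parameter $\int_{s'}^{u'}\tilde\lambda(v)\,dv$, establishing property (2). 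It also bears a one-line remark that $(Y_u)$ is still a point process in the sense of the earlier definitions — it is $\BZ^+$-valued with $Y_0 = X_0 = 0$, and its sample paths $u \mapsto X_{Tu}$ are non-decreasing, piecewise constant, right-continuous with unit jumps, since $t \mapsto X_t$ has these properties and $u \mapsto Tu$ is a continuous increasing reparametrization (in particular the jump times of $Y$ are $\tau_n/T$ where $\tau_n$ are the jump times of $X$).

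I do not anticipate a genuine obstacle here: the result is essentially a change-of-variables bookkeeping lemma, and the only place requiring the slightest care is making sure the filtration relabeling $\scrG_u = \filt_{Tu}$ is done consistently so that ``independent of $\scrG_{s'}$'' really does match ``independent of $\filt_s$'' under the substitution $s = Ts'$. The mild subtlety worth stating explicitly is that the monotone, continuous, surjective nature of $u \mapsto Tu$ is what guarantees both that the relabeled family remains a filtration and that the integral substitution is valid on every subinterval; once that is observed, both conditions of Definition \ref{D:Poisson-process} follow by direct substitution, and the proof is complete.
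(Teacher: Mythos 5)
Your proof is correct and follows essentially the same route as the paper's: verify adaptedness, then check the two defining properties of Definition~\ref{D:Poisson-process} for the increments $Y_u - Y_v = X_{Tu} - X_{Tv}$, with the change of variables $\tau = Tv$ converting $\int_{Tv}^{Tu}\lambda(s)\,ds$ into $\int_v^u \tilde\lambda(\tau)\,d\tau$. The extra bookkeeping you include (that $(\scrG_u)$ is still a filtration and that $Y$ remains a point process) is harmless and only makes explicit what the paper leaves implicit.
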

\begin{proof} See Appendix.
\end{proof}

We now state Proposition \ref{P:Math-setting}, which constructs a probability measure $\BP_1$ on $(\Omega,\filt)$ corresponding to hypothesis $H_1$. This completes the construction of the probabilistic model of our networked decision problem. As will be seen in the next section, the explicit construction of $\BP_1$ via the process $L_t$ and the probability measure $\BP_0$, as described in the proof of Proposition \ref{P:Math-setting}, facilitates the application of Lemma \ref{L:NP-Lemma-B81} in proving Theorem \ref{T:New-Main-Result}.   

\begin{proposition}\label{P:Math-setting}
Suppose $(\Omega,\filt,\BP_0)$ is a probability space, on which
 $N_t=(N_t(1),\dots,N_t(k))$, $t \in [0,T]$, is a vector of independent $\filt^N_t$-Poisson processes whose components $N_t(i)$ admit intensities $\beta_i(t)$, $1 \le i \le k$, with $\beta_i(t)$ satisfying Assumption \ref{A:Process-under-H0}. Then, there exists a probability measure $\BP_1$ on $(\Omega,\filt)$ with $\BP_1 \ll \BP_0$, with respect to which $N_t(i)$, $1 \le i \le k$, are independent $\filt^N_t$-Poisson processes over $t \in [0,T]$ with intensities $\beta_i(t)+\nu_i(t)$, with $\nu_i(t)$ satisfying Assumption \ref{A:Process-under-H1}.
\end{proposition}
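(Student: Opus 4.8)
The plan is to construct $\BP_1$ by an explicit change of measure using the martingale $L_t$, then verify the claimed intensities via a time-rescaling reduction to Br\'emaud's Proposition~\ref{P:VI.2.T3}. First I would invoke Lemma~\ref{L:L_t-martingale} to conclude that $(L_t:t\in[0,T])$ is a nonnegative $(\BP_0,\filt^N_t)$-martingale with $\BE_0[L_T]=1$. This lets me \emph{define} $\BP_1$ on $(\Omega,\filt^N_T)$ by $d\BP_1/d\BP_0 = L_T$; nonnegativity and the mean-one property guarantee that $\BP_1$ is a genuine probability measure, and $\BP_1 \ll \BP_0$ holds by construction (and can be extended to all of $\filt$ in the standard way, or one restricts attention to $\filt = \filt^N_T$). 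It remains to identify the law of $N$ under $\BP_1$.

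To identify the intensities, I would pass to the time interval $[0,1]$ so that Proposition~\ref{P:VI.2.T3} applies verbatim. Using the substitution $u = t/T$ from Lemma~\ref{L:Poisson-rescaling-time}, set $Y_u(i) = N_{Tu}(i)$ and $\scrG_u = \filt^N_{Tu}$; then each $Y_u(i)$ is, under $\BP_0$, a $(\BP_0,\scrG_u)$-Poisson process over $[0,1]$ with deterministic intensity $\lambda_u(i) \triangleq T\beta_i(Tu)$. Define the (deterministic, hence predictable) multiplier $\mu_u(i) \triangleq 1 + \nu_i(Tu)/\beta_i(Tu)$, which is bounded and bounded away from zero by Assumptions~\ref{A:Process-under-H0}--\ref{A:Process-under-H1}, so the integrability hypothesis $\int_0^1 \mu_u(i)\lambda_u(i)\,du < \infty$ is immediate. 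A direct computation — using that the jump times of $Y(i)$ are the rescaled jump times of $N(i)$ and that $\int_0^1 (1-\mu_u(i))\lambda_u(i)\,du = -\int_0^T \nu_i(s)\,ds$ by change of variables — shows that the process $L_1$ built from these $\mu_u(i)$, $\lambda_u(i)$ in Proposition~\ref{P:VI.2.T3} coincides exactly with $L_T$ as defined in \eqref{E:LRatio-specific}--\eqref{E:LRatio-i-specific}. Lemma~\ref{L:L_t-martingale} supplies the required condition $\BE_0[L_1]=1$. Proposition~\ref{P:VI.2.T3} then yields that under $\BP_1$, each $Y_u(i)$ has $(\BP_1,\scrG_u)$-intensity $\mu_u(i)\lambda_u(i) = T(\beta_i(Tu)+\nu_i(Tu))$ over $[0,1]$, and the independence of the $k$ components under $\BP_1$ is part of the conclusion of Proposition~\ref{P:VI.2.T3} (the product form of $L_t$).

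Finally I would undo the rescaling: applying Lemma~\ref{L:Poisson-rescaling-time} in reverse (with $t = Tu$, intensity $\tilde\lambda(u) = T(\beta_i(Tu)+\nu_i(Tu))$ mapping back to $\beta_i(t)+\nu_i(t)$), I conclude that under $\BP_1$ the original processes $N_t(i)$, $1\le i\le k$, are independent $(\BP_1,\filt^N_t)$-Poisson processes over $[0,T]$ with intensities $\beta_i(t)+\nu_i(t)$, with $\nu_i$ satisfying Assumption~\ref{A:Process-under-H1}. The main obstacle I anticipate is purely bookkeeping: carefully checking that the change-of-variables in the exponential term and in the product over jump times makes the Br\'emaud $L_1$ match our $L_T$ on the nose, and making sure the filtration $\scrG_u = \filt^N_{Tu}$ is exactly the one generated by the rescaled process so that the predictability and adaptedness hypotheses of Proposition~\ref{P:VI.2.T3} are met. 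There is no deep difficulty, since all intensities here are deterministic and bounded away from $0$ and $\infty$; the content is entirely in assembling the pieces in the right order.
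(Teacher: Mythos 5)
Your construction of $\BP_1$ and the time-rescaling reduction to Proposition~\ref{P:VI.2.T3} follow the paper's proof essentially step for step, and those parts are sound. The genuine gap is your closing claim that ``the independence of the $k$ components under $\BP_1$ is part of the conclusion of Proposition~\ref{P:VI.2.T3} (the product form of $L_t$).'' It is not. The conclusion of Proposition~\ref{P:VI.2.T3} is only that each $\tilde N_u(i)$ admits the $(\BP_1,\scrG_u)$-intensity $\mu_u(i)\lambda_u(i)$; it asserts neither that the processes are Poisson under $\BP_1$ nor that they are mutually independent. Admitting a deterministic intensity in the predictable-compensator sense does not hand you either property for free---that upgrade is precisely the content of Watanabe's characterization. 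The paper closes this gap explicitly: it first invokes Assertion~($\beta$) of \cite[Theorem II.3.T8]{B81} (with $X_t\equiv 1$) to conclude that $N_t(i)-\int_0^t\left[\beta_i(s)+\nu_i(s)\right]ds$ is a $(\BP_1,\filt^N_t)$-martingale for each $i$, and then applies the Multichannel Watanabe Theorem \cite[Theorem II.2.T6]{B81}, which simultaneously delivers that the $N_t(i)$ are Poisson with the stated deterministic intensities \emph{and} that they are mutually independent under $\BP_1$. Without this step your argument establishes only the intensities, not the full conclusion of the proposition. The remainder of your outline---the mean-one martingale property from Lemma~\ref{L:L_t-martingale}, the definition $d\BP_1/d\BP_0=L_T$, the verification that the rescaled Br\'emaud likelihood $\tilde L_1$ coincides with $L_T$, and the reverse application of Lemma~\ref{L:Poisson-rescaling-time}---matches the paper's argument.
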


\begin{proof}
Recall the process $(L_t:t \in [0,T])$ defined by \eqref{E:LRatio-specific}, \eqref{E:LRatio-i-specific}. Lemma \ref{L:L_t-martingale} assures us that $L_T$ is a nonnegative random variable with $\BE_0[L_T]=1$. Hence, $\BP_1$ defined through
\[
\BP_1(A) \triangleq \int_A L_T(\omega) \BP_0(d\omega) \quad \text{for $A \in \filt$,}
\]  
is indeed a probability measure on $(\Omega,\filt)$ which is absolutely continuous with respect to $\BP_0$. We would now like to show that with respect to $\BP_1$, $N_t(i)$ for $1 \le i \le k$, are independent Poisson processes over $t \in [0,T]$ with intensities $\beta_i(t)+\nu_i(t)$. To enable the application of Proposition \ref{P:VI.2.T3}, we use a time rescaling argument. 

Let $u \triangleq t/T$ be a rescaled time variable taking values in $[0,1]$. For $1 \le i \le k$, $u \in [0,1]$, let $\tilde{N}_u(i) = N_t(i) = N_{T\cdot u}(i)$. Let $\tilde{N}_u = (\tilde{N}_u(1),\dots,\tilde{N}_u(k))$ and let $\mathscr{G}_u \triangleq \sigma(\tilde{N}_v:0 \le v \le u)$ for $u \in [0,1]$. By Lemma \ref{L:Poisson-rescaling-time}, each  $\tilde{N}_u(i)$, $1 \le i \le k$ is a $(\BP_0,\mathscr{G}_u)$-Poisson process with intensity $\tilde{\beta}_i(u) \triangleq T\beta_i(T \cdot u)$. The independence of $\tilde{N}_u(i)-\tilde{N}_v(i)$ and $\tilde{N}_p(j)-\tilde{N}_q(j)$, $i \neq j$ follows from the independence of $N_{T\cdot u}(i)-N_{T\cdot v}(i)$ and $N_{T\cdot p}(j)-N_{T\cdot q}(j)$. Denote by $(\tilde{\tau}_n(i):n \ge 1)$ the sequence of jump times of $\tilde{N}_u(i)$. Note that $\tilde{\tau}_n(i) = \tau_n(i)/T$. Letting $\tilde{\nu}_i(u) \triangleq T\nu_i(T\cdot u)$ for $1 \le i \le k$, define a process $(\tilde{L}_u:u \in [0,1])$ by  
\begin{equation}
\tilde{L}_u \triangleq \prod_{i=1}^k \tilde{L}_u(i)
\end{equation}  
with
\begin{equation}
\tilde{L}_u(i) \triangleq \exp\left(-\int_0^u \tilde{\nu}_i(s)ds\right) \prod_{n=1}^{\tilde{N}_u(i)}\left(1+\frac{\tilde{\nu}_i(\tilde{\tau}_n(i))}{\tilde{\beta}_i(\tilde{\tau}_n(i))}\right) \enspace .
\end{equation}
It is now easily checked that $L_t = \tilde{L}_{t/T}$ for all $t \in [0,T]$, which implies in particular that 
\begin{equation}
\frac{d\BP_1}{d\BP_0} = \tilde{L}_1 \enspace .
\end{equation}
We now apply Proposition \ref{P:VI.2.T3} \cite[Theorem VI.2.T3]{B81}, using the \textit{rescaled} time variable $u \in [0,1]$, filtration $\mathscr{G}_u$, with  $\tilde{\beta}_i(u)$ and $ 1+ \tilde{\nu}_i(u)/\tilde{\beta}_i(u)$ in place of $\lambda_t(i)$ and  $\mu_t(i)$ respectively, to infer that $\tilde{N}_u(i)$ has intensity $\tilde{\beta}_i(u)+\tilde{\nu}_i(u)$ with respect to $\BP_1$. Using Lemma \ref{L:Poisson-rescaling-time} in the ``reverse" direction (i.e. interchanging $t$ and $u$, replacing $T$ by $1/T$), it now follows that the $N_t(i)$'s, $1 \le i \le k$, have $(\BP_1,\filt^N_t)$-intensities $\beta_i(t)+\nu_i(t)$, respectively. To complete the proof, it remains to show that the $N_t(i)$, $1 \le i \le k$, are independent and Poisson under $\BP_1$. Using Assertion ($\beta$) of \cite[Theorem II.3.T8]{B81} (with $X_t \equiv 1$), we get that for $1 \le i \le k$, 
\begin{equation}
N_t(i) - \int_0^t \left[\beta_i(s) + \nu_i(s)\right]ds  
\end{equation}
is a $(\BP_1,\filt^N_t)$-martingale. By the Multichannel Watanabe Theorem \cite[Theorem II.2.T6]{B81}, it now follows that with respect to $\BP_1$, the $N_t(i)$'s are independent $\filt^N_t$-Poisson processes over $t \in [0,T]$  with intensities $\beta_i(t)+\nu_i(t)$, respectively. 
\end{proof}

Before concluding this section, we state the Neyman-Pearson Lemma which will be used in the next section to prove Theorem \ref{T:New-Main-Result}. The Neyman-Pearson Lemma describes an optimal rule for deciding between probability measures $\BP_0$ and $\BP_1$ on a measurable space $(\Omega,\filt)$ on the basis of observations in the sub-$\sigma$-field $\mathscr{G} \subset \filt$. Thus, for any event $A \in \mathscr{G}$, it is known whether or not $A$ has occurred. Recall that $\BP_1$ is \textit{absolutely continuous} with respect to $\BP_0$, both restricted to $(\Omega,\mathscr{G})$, denoted $\BP_1 \ll \BP_0$ if, whenever $A \in \mathscr{G}$ with $\BP_0(A)=0$, we have $\BP_1(A)=0$. 

\begin{lemma}[Neyman-Pearson Lemma, Theorem VI.1.T1, \cite{B81}]\label{L:NP-Lemma-B81}
For $\alpha \in (0,1)$, suppose $\gamma$ is a real number such that
\begin{equation}
\BP_0(L \ge \gamma)=\alpha
\end{equation}
where $L$ is the Radon-Nikodym derivative of $\BP_1$ with respect to $\BP_0$, both probabilities on $(\Omega,\mathscr{G})$. Then the decision strategy $A_1^* = \{L \ge \gamma\}$ is optimal for $\mathscr{G}$-observations in the sense that for any $A_1 \in \mathscr{G}$ with $\BP_0(A_1) \le \alpha$, we have $\BP_1(A_1^*) \ge \BP_1(A_1)$.
\end{lemma}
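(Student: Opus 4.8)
This is the classical Neyman--Pearson Lemma, and the plan is to give the standard one-step comparison argument, exploiting that $A_1^* = \{L \ge \gamma\}$ concentrates its ``mass'' precisely where the likelihood ratio $L$ is largest: for any test $A_1 \in \mathscr{G}$ one has $\BP_1(A_1) = \int_{A_1} L\, d\BP_0$, so among sets of $\BP_0$-size at most $\alpha$ this integral is maximized by taking $A_1$ to be a superlevel set of $L$. Before the main estimate I would record that $\gamma > 0$: since $L = d\BP_1/d\BP_0 \ge 0$ holds $\BP_0$-a.s., a nonpositive $\gamma$ would force $\BP_0(L \ge \gamma) = 1 \ne \alpha$, contradicting the hypothesis.

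Next, fixing an arbitrary competing test $A_1 \in \mathscr{G}$ with $\BP_0(A_1) \le \alpha$, I would establish the pointwise inequality
\[
\bigl(1_{A_1^*}(\omega) - 1_{A_1}(\omega)\bigr)\bigl(L(\omega) - \gamma\bigr) \;\ge\; 0 \qquad \text{for } \BP_0\text{-a.e. } \omega .
\]
The verification is by cases: if $\omega \in A_1^*$ then $L(\omega) - \gamma \ge 0$ and $1_{A_1^*}(\omega) - 1_{A_1}(\omega) \in \{0,1\}$, whereas if $\omega \notin A_1^*$ then $L(\omega) - \gamma < 0$ and $1_{A_1^*}(\omega) - 1_{A_1}(\omega) \in \{-1,0\}$, so in either case the product is nonnegative. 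Integrating against $\BP_0$ (each piece integrable since $|(1_{A_1^*}-1_{A_1})L| \le L$ with $\int L\, d\BP_0 = 1$ and $\gamma < \infty$) and using $\BP_1(B) = \int_B L\, d\BP_0$, I would obtain
\[
\BP_1(A_1^*) - \BP_1(A_1) \;\ge\; \gamma\bigl(\BP_0(A_1^*) - \BP_0(A_1)\bigr) \;=\; \gamma\bigl(\alpha - \BP_0(A_1)\bigr) \;\ge\; 0,
\]
where I used $\BP_0(A_1^*) = \BP_0(L \ge \gamma) = \alpha$, the hypothesis $\BP_0(A_1) \le \alpha$, and $\gamma > 0$. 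This is precisely $\BP_1(A_1^*) \ge \BP_1(A_1)$, the claimed optimality.

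I do not expect a genuine obstacle here; the only points that need a little care are (i) checking $\gamma \ge 0$ before multiplying the probability inequality by it, and (ii) observing that the hypothesis ``$\BP_0(L \ge \gamma)$ equals $\alpha$ exactly'' is what keeps the argument within the class of \emph{non-randomized} tests---in full generality the Neyman--Pearson test randomizes on the boundary set $\{L = \gamma\}$, but that extension is unnecessary for the statement as phrased. Alternatively, one may simply invoke \cite[Theorem VI.1.T1]{B81}.
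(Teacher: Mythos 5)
Your proof is correct. Note, however, that the paper does not actually prove this lemma: it is quoted verbatim from Br\'emaud (Theorem VI.1.T1 of \cite{B81}) and the text simply says ``See \cite{B81} for a proof,'' so there is no in-paper argument to compare against. What you supply is the classical comparison argument --- the pointwise inequality $(1_{A_1^*}-1_{A_1})(L-\gamma)\ge 0$, integrated against $\BP_0$ and converted to $\BP_1$-probabilities via $\BP_1(B)=\int_B L\,d\BP_0$ --- which is exactly the standard route (and essentially Br\'emaud's). Your housekeeping is also in order: the observation that $\gamma>0$ is forced by $\alpha\in(0,1)$ and $L\ge 0$ is needed before multiplying $\alpha-\BP_0(A_1)\ge 0$ by $\gamma$, the integrability check via $\int L\,d\BP_0=1$ is the right one, and your remark that the exact equality $\BP_0(L\ge\gamma)=\alpha$ is what lets one avoid randomizing on $\{L=\gamma\}$ correctly identifies why the lemma is stated with that hypothesis (the paper's footnote to Theorem~\ref{T:New-Main-Result} acknowledges the same restriction). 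In short: complete, correct, and more self-contained than the paper itself on this point.
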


See \cite{B81} for a proof. We will use this Lemma in the next section to prove Theorem \ref{T:New-Main-Result}, with $\filt^N_T$ playing the role of $\mathscr{G}$.

\subsection{Proof of Theorem \ref{T:New-Main-Result}}\label{SS:Proof-Main}

Let $\hat{\BP}_0$, $\hat{\BP}_1$ be the restrictions of $\BP_0$, $\BP_1$ respectively to $\filt^N_T$. Since $\BP_1 \ll \BP_0$ on $\filt$, the restrictions of $\BP_1$ and $\BP_0$ to the smaller $\sigma$-field $\filt^N_T$ inherit the absolute continuity, i.e. $\hat{\BP}_1\ll \hat{\BP}_0$. Hence, the Radon-Nikodym derivative exists; i.e.\ there exists a nonnegative, $\filt^N_T$-measurable random variable $\xi$, denoted $\frac{d\hat{\BP}_1}{d\hat{\BP}_0}$, such that for any $A \in \filt^N_T$,
\begin{equation}\label{E:RNDeriv}
\BP_1(A) = \int_A \xi(\omega) \BP_0(d\omega)\enspace.
\end{equation}
Moreover, this Radon-Nikodym derivative is unique in the sense that if $\tilde{\xi}$ is any nonnegative, $\filt^N_T$-measurable random variable satisfying (\ref{E:RNDeriv}) with $\tilde{\xi}$ replacing $\xi$, then $\xi=\tilde{\xi}$, $\BP_0$-a.s. Since $L_T$ is a nonnegative, $\filt^N_T$-measurable random variable (by Lemma \ref{L:L_t-martingale}) which satisfies (\ref{E:RNDeriv}), it follows that
\begin{equation}
L_T = \frac{d\hat{\BP}_1}{d\hat{\BP}_0}\enspace,
\end{equation}
$\BP_0$-a.s. A direct application of Lemma \ref{L:NP-Lemma-B81} completes the proof.

\section{Performance Analysis}\label{section:bounds}

Here we provide a lower bound on the probability of detection and an upper bound on the probability of false alarm when the proposed detection scheme is used. It turns out that bounds on both these probabilities involve the tails of (different) Poisson distributions. To compactly describe our results, we follow the notation of \cite{G87}.

\begin{definition}[Poisson Tails]
For $\lambda >0$, $j \in \BZ^+$, let $p(\lambda,j)$ denote the Poisson distribution
\[
p(\lambda,j) = e^{-\lambda} \frac{\lambda^j}{j!}~.
\]
The left and right tail probabilities are defined by
\begin{equation} \label{poisson-tails}
P(\lambda,n) = \sum_{j=0}^n p(\lambda,j)~, \hspace{.2in}
\overline{P}(\lambda,n) = \sum_{j=n}^\infty p(\lambda,j)~,
\end{equation} 
respectively. Note that $P(\lambda,n-1)+\overline{P}(\lambda,n)=1$. 
\end{definition}

The following quantities will also be of interest:
\begin{equation} \label{BJ}
B = \sum_{i=1}^k \int_0^T \beta_i(s)ds\enspace, \hspace{.2in} J = \sum_{i=1}^k \int_0^T \nu_i(s)ds \enspace.
\end{equation}
It now follows from \eqref{E:LRatio-specific}-\eqref{E:LRatio-i-specific} that
\[
L_T = e^{-J}\prod_{i=1}^k \prod_{n=1}^{N_T(i)}\left(1+\frac{\nu_i(\tau_n(i))}{\beta_i(\tau_n(i))}\right) \enspace.
\]
In the sequel we will also use the integer ceiling function $\lceil \cdot \rceil$ which assigns to a real number $x$ the smallest integer greater than or equal to $x$.

For $\gamma > 0$, consider the test $A_1^* = \{L_T \ge \gamma\}$.  
A lower bound on the probability of detection $\BP_1(L_T \ge \gamma)$, 
and an upper bound on the probability of false alarm $\BP_0(L_T \ge \gamma)$
can now be obtained as follows. 
Recalling Assumptions \ref{A:Process-under-H0} and \ref{A:Process-under-H1}, define
\begin{subequations} \label{bounds}
\begin{align}
C &= 1+\frac{\nu_\sfmin}{\beta_\sfmax} \le \min_{1 \le i \le k} \medspace \min_{0 \le t \le T} \left(1+\frac{\nu_i(t)}{\beta_i(t)}\right)\enspace,\\
D &= 1 + \frac{\nu_\sfmax}{\beta_\sfmin} \ge \max_{1 \le i \le k} \medspace\max_{0 \le t \le T} \left(1+\frac{\nu_i(t)}{\beta_i(t)}\right)\enspace.
\end{align}
\end{subequations}
Note that if one can find $\ell^-$, $\ell^+$ such that
$
\ell^- \le L_T \le \ell^+,
$ 
then for $j \in \{0,1\}$,
\[
\BP_j(\ell^- \ge \gamma) \le \BP_j(L_T \ge \gamma) \le \BP_j(\ell^+ \ge \gamma) \enspace.
\]
Letting
\begin{align*}
\ell^- &= e^{-J} \prod_{i=1}^k C^{N_T(i)}\;, & 
\ell^+ &= e^{-J} \prod_{i=1}^k D^{N_T(i)}\;.
\end{align*}
it can be verified that $\ell^- \le L_T \le \ell^+$. Next, note that
\begin{align*}
\ell^- \ge \gamma \; &\iff \; \sum_{i=1}^k N_T(i) \ge \frac{\log \gamma + J}{\log C} \enspace, \\
\ell^+ \ge \gamma \; &\iff \; \sum_{i=1}^k N_T(i) \ge \frac{\log \gamma + J}{\log D} \enspace.
\end{align*}
Since $N_T(i)$ for $1 \le i \le k$ are independent Poisson random variables 
with parameters $\int_0^T \beta_i(s)ds$ with respect to $\BP_0$, 
it follows that $\sum_{i=1}^k N_T(i)$ is a Poisson random variable 
with parameter $B = \sum_{i=1}^k \int_0^T \beta_i(s)ds$ with 
respect to $\BP_0$. Under the probability measure $\BP_1$, $N_T(i)$ for $1 \le i \le k$ 
are independent Poisson random variables with parameters 
$\int_0^T [\beta_i(s) + \nu_i(s)]ds$.  
It follows that under $\BP_1$, $\sum_{i=1}^k N_T(i)$ is a Poisson random variable with parameter $J+B=\sum_{i=1}^k \int_0^T [\nu_i(s) + \beta_i(s)]ds$. 
Hence,
\begin{subequations} 
\begin{align}
\BP_1(L_T \ge \gamma) &\ge \overline{P}\left( J+B, \left\lceil \frac{\log \gamma + J}{\log C} \right\rceil\right)\enspace,       
 \label{probability-of-detection} \\
\BP_0(L_T \ge \gamma) &\le \overline{P} \left(B, \left\lceil \frac{\log \gamma + J}{\log D} \right\rceil\right)\enspace.       
\label{probability-of-false-alarm}
\end{align}
\end{subequations}


\section{Application to Nuclear Detection}\label{S:Example}

In this section, the framework developed above is applied to the problem of detecting radioactive materials in transit, using a network of spatially distributed sensors. The setting here is simple but representative of a frequently encountered class of scenarios. Our method is not restricted, however, to this setting. Indeed, the results in the previous sections apply whenever the intensity of the suspected source 
and the motion of the source relative to the sensors are deterministic and known.

Radiation sensors always record background radiation (due to cosmic radiation and due to naturally occurring radioactive isotopes in the environment). In the absence of illicit nuclear material (hypothesis $H_0$ is true), the sensors simply measure background.  If radioactive material is present (hypothesis $H_1$ is true), the sensors record the sum of the photons coming from background and the photons coming from the material. These two sources of radiation act independently, and one can treat each sensor as observing a single Poisson process whose intensity is the sum of intensities due to background and material (the source).  The problem we face is to determine, in a fixed amount of time, whether a target passing in front of the sensors is a source of radiation.

The specific assumptions for this problem are as follows: The workspace is the horizontal plane, $\BR^2$. We have $k$ sensors uniformly spaced along the positive $x$-axis at locations $x=0$, $x=\ell$, $x=2\ell,\dots,x=(k-1)\ell$ 
in a configuration as that shown in Fig. \ref{fig:architecture}. 
To span a length of approximately $100$~m, we choose $\ell = 11$~m, $k = 10$,
and for simplicity, we assume that the sensors are identical. Let $t \in [0,T]$ denote time, where $t=0$ corresponds to the instant the count recording is initiated, and $t=T$ is the final time at which a decision regarding the existence of a source is to be made. Let $\beta_i(t)$ be the intensity of background radiation at the location of sensor $i$, $1 \le i \le k$, which does not have to be uniform and in general can be time-dependent. For simplicity, we assume in this example that background intensity is time-invariant, so $\beta_i(t) = \beta_i \in \BN$, where $\beta_i$ is assumed to be varying between locations, from a minimum of $\beta_\mathsf{min} = 2$~\ac{cps} to a maximum of $\beta_\mathsf{max} = 8$~\ac{cps}, with the maximum appearing at the first and last sensor and the minimum occurring at the sensor in the middle (Fig.~\ref{figure:background}). We assume that a target is passing at a distance 
$h = 0.362$~m (the equivalent of 14$\scriptstyle{\frac{1}{4}}$~inches)
from the $x$-axis, namely with a constant coordinate $y=h$, appearing first at some initial location 
$(x_0,h) = (-4,0.362) \in \BR^2$, 
and moving with constant speed $v=17$~m/s (roughly $38$~mph)
in the direction of the positive $x$-axis.   

To illustrate the derivation process, let us for the sake of argument assume that the acceptable probability of false alarm in this scenario is $\alpha =10^{-6}$ (see \eqref{probability-of-false-alarm}). With $r_i(t)$, $1 \le i \le k$, denoting the distance between sensor $i$ and the potential source, the intensity $\nu_i(t)$ at sensor $i$ due to the source is modeled in \cite{NDTW} by\footnote{In fact, for a planar detection scenario the sold angle scales proportionally to $1/r_i$.}
\begin{equation} \label{perceived-intensity}
\nu_i(t) =\frac{\chi a}{r_i(t)^2}
\end{equation}
where $a > 0$ is the activity of the potential source (in 
cps) and $\chi > 0$ (in m$^2$) is the sensors' 
cross-section coefficient.\footnote{For the case of heterogenous sensors, each will have its own $\chi_i$.}
We assume a numerical value for $\chi a$ equal to what has been used in \cite{NDTW}, 
but shielded in 3 cm of lead, dropping the source's perceived intensity by one order of magnitude to
$\chi a = 506.8$~\ac{cps}$\cdot\mathrm{m}^2$.
We also assume that no sensor is ever closer than distance $h$ to the target, 
ensuring that $\nu_i(t)$ is always bounded.

Since the location of the potential source at time $t \in [0,T]$ is $(x_0 + vt,h)$, the distance $r_i(t)$ between the potential source and sensor $i$, $1 \le i \le k$ is given by
$ 
r_i(t) = \sqrt{(x_0+vt-(i-1)\ell)^2 + h^2}.
$ 
Recalling \eqref{perceived-intensity} with 
\[
T = \frac{(k-1)\ell - 2 x_0}{v} = 6.3\enspace \mathrm{s}
\] 
for the decision time, we get  
\begin{align*}
\beta_i(t) &\equiv \beta_i \enspace, & 
\nu_i(t) &= \frac{\chi a}{(x_0 + vt-(i-1)\ell)^2 + h^2}
\end{align*}
for $t \in [0,T]$. 
Since 
\begin{equation*}
\int_0^T \nu_i(s)ds =  \frac{\chi a}{h v}\left[\tan^{-1}\left(\frac{x_0+vT -(i-1)\ell}{h}\right) 
 - \tan^{-1}\left(\frac{x_0-(i-1)\ell}{h}\right)\right],
\end{equation*}
from \eqref{BJ} we obtain
\[
J  = {\textstyle \frac{\chi a}{h v}} \sum_{i=1}^k \left[ \textstyle \tan^{-1} \left(\frac{x_0 + vT-(i-1)\ell}{h}\right) -  
\tan^{-1}\left(\frac{x_0-(i-1)\ell}{h}\right) \right]
\]
and for the ten-sensor array we have $J = 2559.74$~counts. 
\begin{figure}[h!]
\subfigure[background intensity]{ \label{figure:background}
\includegraphics[width=0.45\textwidth]{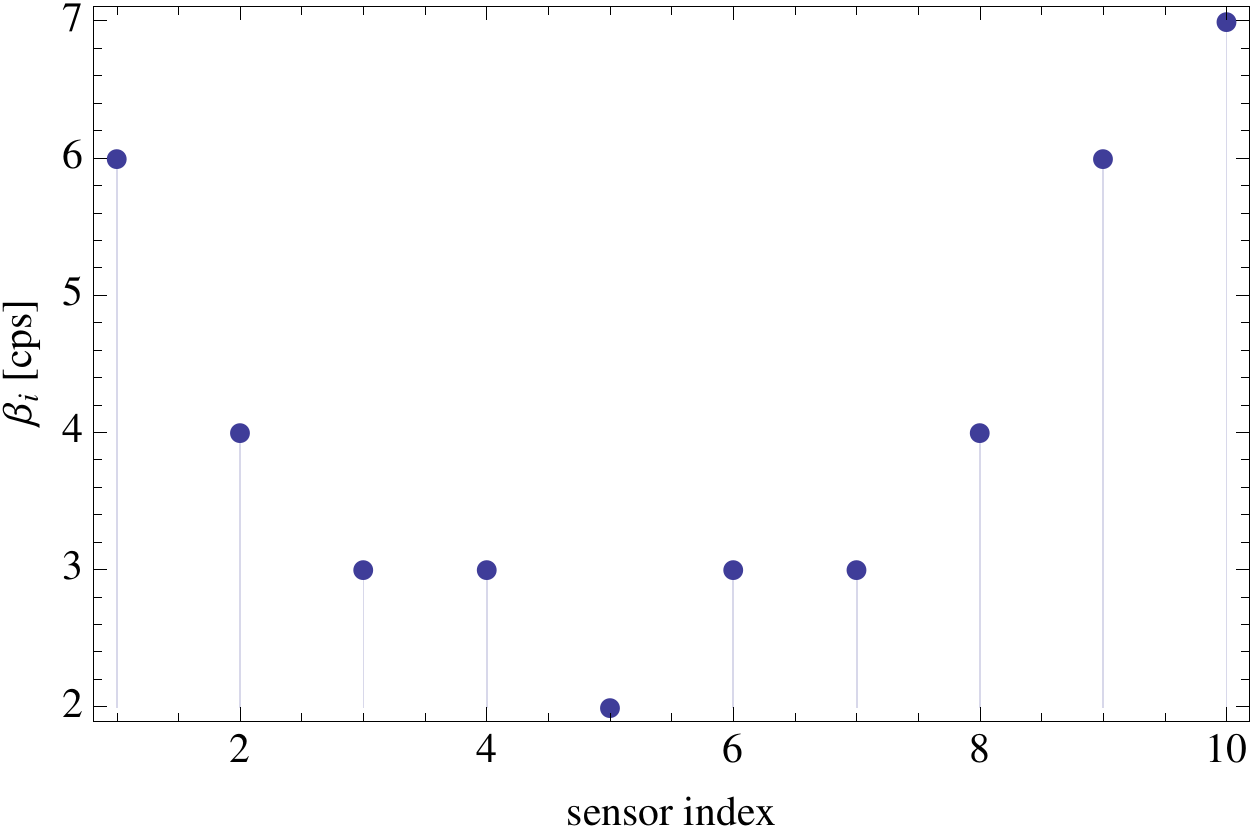}
}
\subfigure[$\int_0^1 \nu_i(s) ds$]{\label{subfig:intni}
\includegraphics[width=0.5\textwidth]{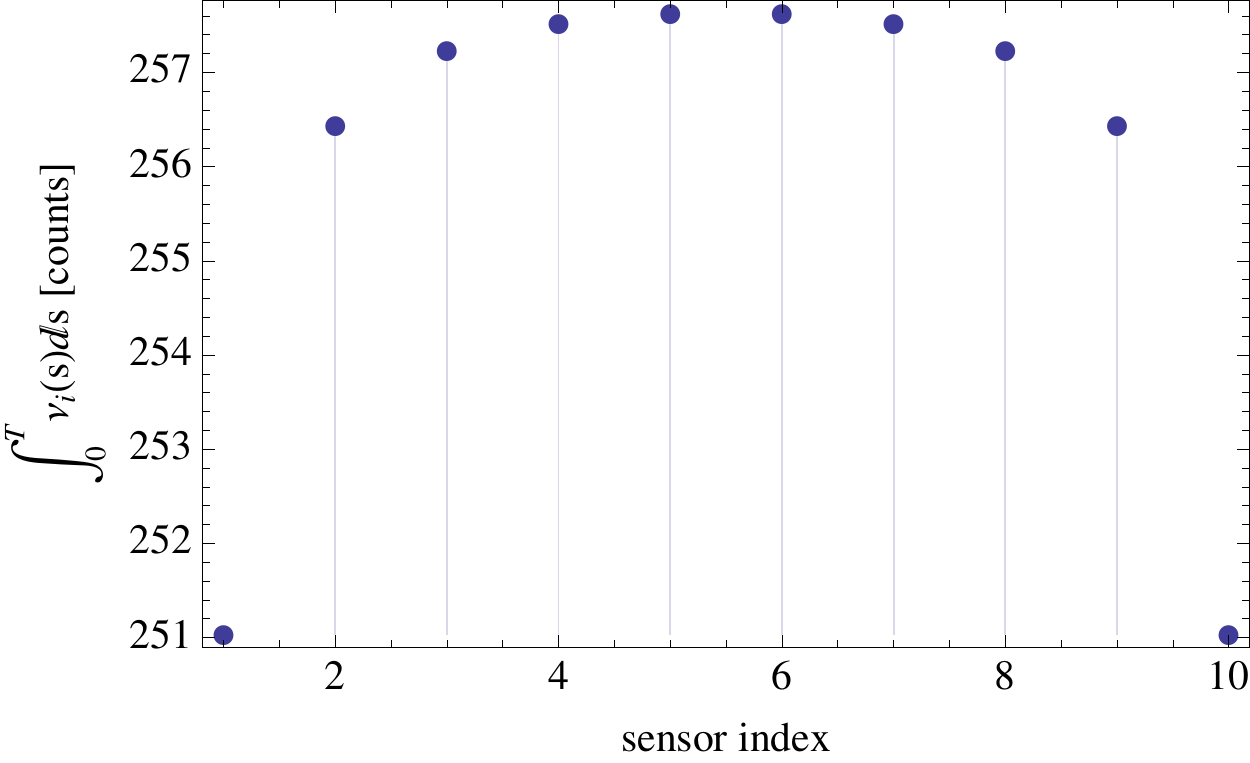}
}
\caption{Distribution of background intensity (\ref{figure:background}) 
and integrated perceived source intensity (\ref{subfig:intni}) 
over the considered ten-sensor array.}
\end{figure}

It is not hard to see that
\begin{align*}
r_\sfmin &= \min_{1 \le i \le k} \medspace \min_{0 \le t \le T} r_i(t) = h\\
 r_\sfmax &= \max_{1 \le i \le k} \medspace \max_{0 \le t \le T} r_i(t) = \sqrt{(x_0 + vT)^2 + h^2}~.
\end{align*}
Thus, for the case of the ten-sensor array, \eqref{bounds} evaluates to 
\begin{align*}
C  &= 1 + \frac{\chi a}{[(x_0 + vT)^2 + h^2] \beta_\sfmax} = 1 + 5.97 \times 10^{-3} \\
D &= 1 + \frac{\chi a}{h^2 \beta_\sfmin} = 1 + 935.24 \enspace.
\end{align*}
With reference to \eqref{BJ} and Fig.~\ref{figure:background}, we have
$B = \frac{4387}{17}$~counts, and with this we can attempt to numerically compute 
a threshold $\gamma$ for the likelihood ratio test using \eqref{probability-of-false-alarm}.
It can be verified that the Poisson tail on the right hand side of 
\eqref{probability-of-false-alarm} falls below $10^{-6}$ when the second
argument of $\overline{P}(\cdot,\cdot)$ increases to $338$ (see Fig.~\ref{figure:pfavn}).
We thus compute the value of $\gamma$ for which 
$\left\lceil \frac{\log \gamma + J}{\log D} \right\rceil \ge 338$ counts, and obtain that with
$\gamma = 0.1718$, the bound on the probability of false alarm $\BP_0$
falls at $8.5 \times 10^{-7}$, which is below the acceptable error rate.  The decision rule therefore is based on the test:
\begin{equation} \label{test}
L_1 \ge 0.1718
\end{equation}
which if true, suggests that the target is indeed a radioactive source.  
\begin{figure}[h!]
\subfigure[Poisson tail]{ \label{figure:pfavn}
\includegraphics[width=0.47\textwidth]{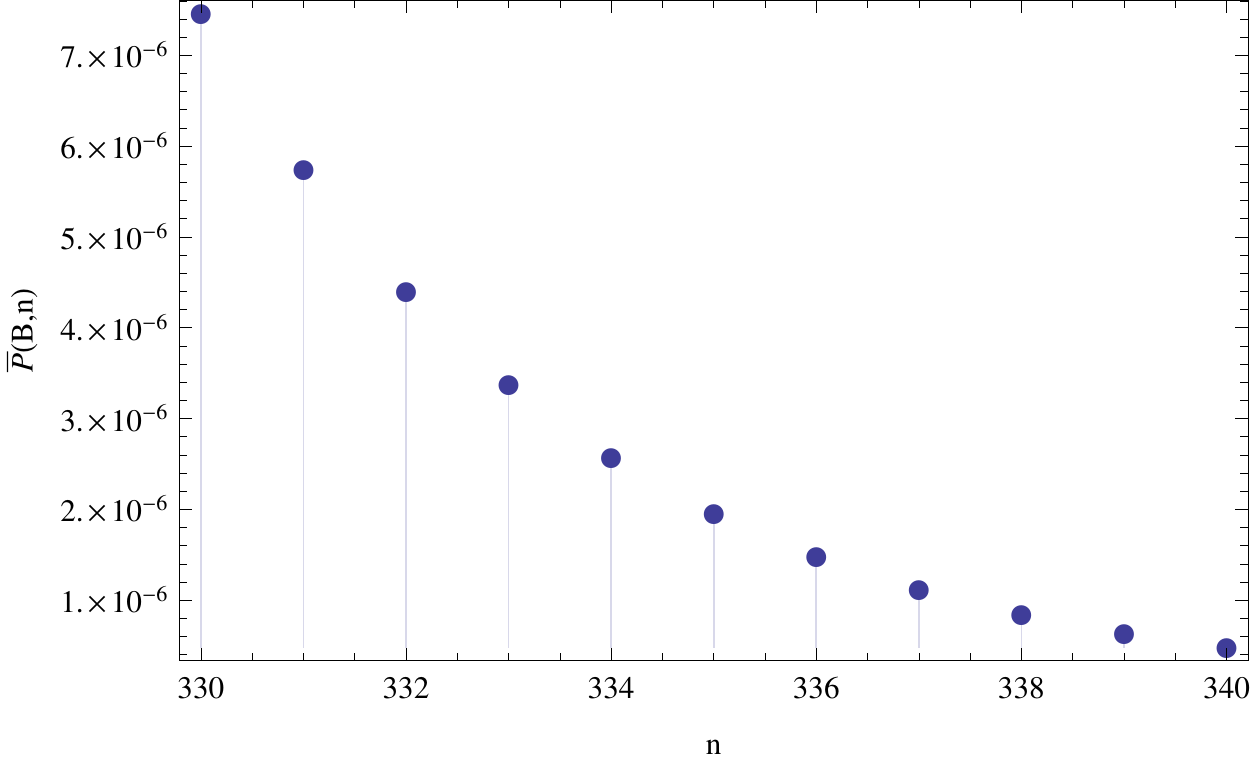}
}
\subfigure[False alarm rate]{ \label{figure:pfa}
\includegraphics[width=0.45\textwidth]{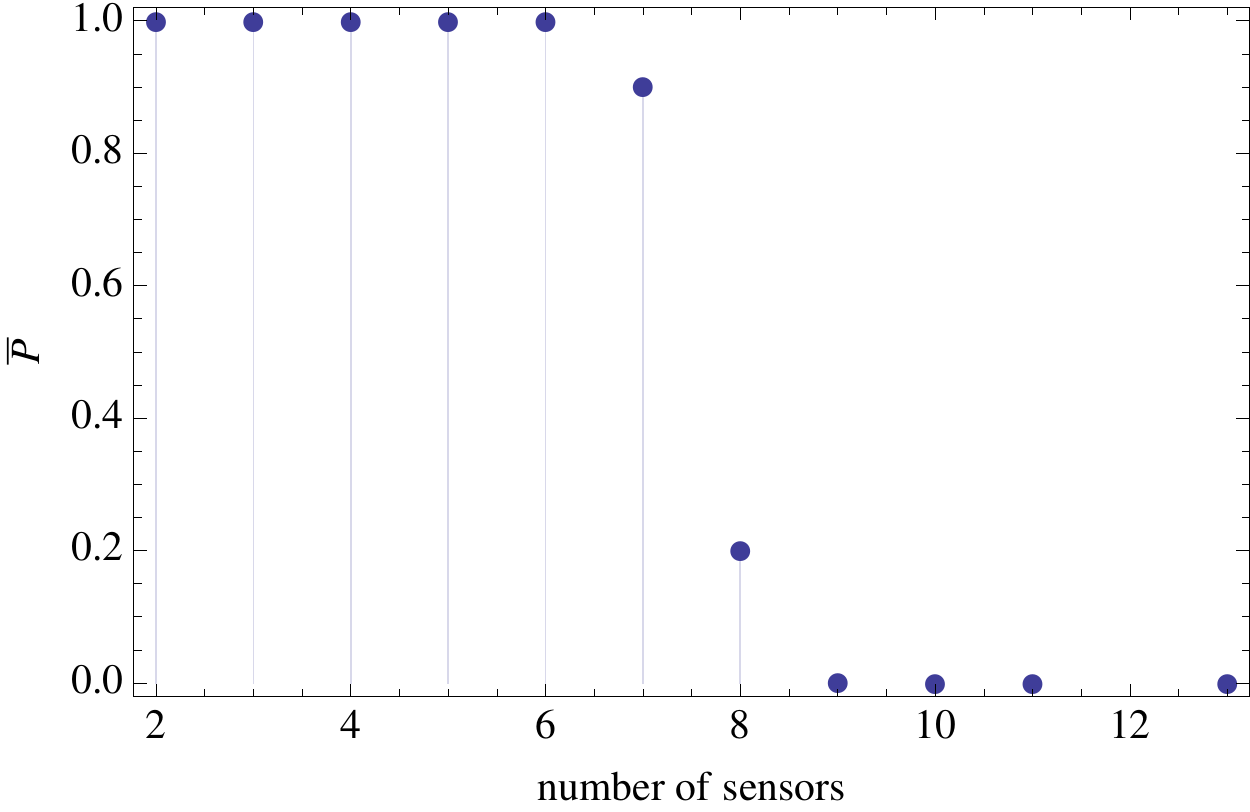}
}
\caption{Bound on the probability of false alarm (\textsc{pfa}). 
The Poisson tail that upper bounds the probability of false alarm decreases monotonically with its
second argument (Fig. \ref{figure:pfavn}), and a value for the latter can be identified for which the 
former falls below a desired value.
Without resetting the threshold constant, we see that the upper bound on the \textsc{pfa} decreases 
with the addition of new sensors. For $k=8$ sensors and $\gamma=0.1718$ a \textsc{pfa} is at most 
20\%, while for two additional sensors one can guarantee a one-in-a-million chance for a false alarm.}
\end{figure}
It should be mentioned that there is conservatism built in the bounds \eqref{bounds}, which renders the probability of detection using \eqref{test} rather impractically 
small for the given false alarm rate.  In addition, it is acknowledged that the illustrated method for obtaining
a threshold makes the solution for $\gamma$ very sensitive to changes in the underlying parameters $h$, $v$, $\chi a$
and $\beta_\mathsf{min}$.
Improving the bounds in \eqref{bounds} is part of ongoing work. Nevertheless, the analysis still gives insight into the effect of different parameters on the probability of detection. To illustrate that point, let us consider the possibility of using more sensors with the same spacing $\ell$ as before.  Without changing the decision rule \eqref{test} (keeping the same threshold), the analysis shows (Fig.~\ref{figure:pfa}) how the upper bound on the probability of false alarm (\textsc{pfa}) estimated in \eqref{probability-of-false-alarm} not only falls monotonically with the addition of new sensors, but that there is
a clear transition between the state where the sensor network decision is unreliable, and that where an alarm should
be taken into account seriously.  Such information can be useful for determining the maximum number of detectors
that can get out of commission before significantly compromising the effectiveness of the system.

\section{Conclusions}\label{S:conclusion}

A network of sensors can be deployed to optimally decide between two hypotheses regarding the statistics of a time-inhomogeneous point process in a way that preserves the accuracy of centralized decision making without incurring the increased communication cost. The sensors collect their measurements over a fixed-time interval, at the end of which a processed summary is communicated to a fusion center. In particular, each sensor transmits a locally computed likelihood ratio to the fusion center, which then compares the product of the sensor-specific likelihood ratios against a threshold to arrive at a decision. The analysis is based on the Neyman-Pearson formulation. A set of conservative performance bounds on the error probabilities is provided and the framework is applied to the problem of detecting a moving radioactive source using an array of sensors.  The work here supports the development of a general decision-making framework that leverages networks of mobile sensor platforms to enhance detection capability in problems that involve time-inhomogeneous point processes.

\section{Appendix}

\begin{proof}[Proof of Lemma \ref{L:L_t-martingale}]
The process $L_t$ given by \eqref{E:LRatio-specific} and \eqref{E:LRatio-i-specific} is a particularization to our problem of the general process $L_t$ in Proposition \ref{P:VI.2.T3}. The latter admits the representation \cite[Equation VI.2.4]{B81}
\begin{equation}\label{E:VI.2.4}
L_t = 1 + \sum_{i=1}^k \int_0^t L_{s-} \medspace(\mu_s(i)-1) dM_s(i)
\end{equation}
where $M_t(i) = N_t(i) - \int_0^t \lambda_s(i)ds$, and for $t>0$, $L_{t-} = \lim_{s \nearrow t} L_s$ is the left limit of $L_t$. The application of (\ref{E:VI.2.4}) to our problem, with $L_t$ given by \eqref{E:LRatio-specific}, \eqref{E:LRatio-i-specific}, yields 
\begin{equation}\label{E:L_t-stoch-int-eq}
L_t = 1 + \sum_{i=1}^k \int_0^t L_{s-} \medspace \frac{\nu_i(s)}{\beta_i(s)} dM_s(i)
\end{equation}
where $M_t(i)=N_t(i)-\int_0^t  \beta_i(s)ds$.  The non-negativity of $L_t$ is evident from \eqref{E:LRatio-specific}, \eqref{E:LRatio-i-specific}. To complete the proof, it thus suffices to show that each of the integrals on the right in \eqref{E:L_t-stoch-int-eq} is a martingale. By \cite[Theorem II.3.T8]{B81}, for $1 \le i \le k$, $\int_0^t L_{s-} (\nu_i(s)/\beta_i(s)) dM_s(i)$ is a $(\BP_0,\filt^N_t)$-martingale whenever 
\begin{equation}
\BE_0\left[\int_0^t L_{s-} \medspace\nu_i(s) ds\right] < \infty 
\end{equation}
for $t \ge 0$.\footnote{Actually, Theorem II.3.T8 in \cite{B81} also requires that $L_{t-}$ be $\filt^N_t$-predictable. This follows from the left-continuity and $\filt^N_t$-adaptedness of $L_{t-}$, by \cite[Theorem I.3.T5]{B81}.} By Assumptions \ref{A:Process-under-H0}, \ref{A:Process-under-H1}, we get that
\begin{equation}
L_t \le \prod_{i=1}^k K^{N_t(i)}\enspace ,
\end{equation}
where $K = 1 + \nu_\sfmax/\beta_{\sfmin}$. Since $K>1$ and the $N_t(i)$, $1 \le i \le k$ are independent with each $N_t(i)$ non-decreasing in $t$, we get
\begin{eqnarray}
\BE_0\left[\int_0^t L_{s-} \medspace\nu_i(s)  ds\right] & \le & \nu_\sfmax  \medspace t \prod_{i=1}^k \BE_0[K^{N_t(i)}]\\
& \le & \nu_\sfmax  \medspace t \prod_{i=1}^k \exp\left[(K-1) \int_0^t \beta_i(s)ds\right] < \infty
\end{eqnarray}
where the last line follows from the fact that under $\BP_0$, for $t \ge 0$, each $N_t(i)$ is a Poisson random variable with parameter $\int_0^t \beta_i(s)ds$.
\end{proof}

\begin{proof}[Proof of Lemma \ref{L:Poisson-rescaling-time}]
Note that $\tilde{\lambda}(u)$ is nonnegative and measurable with $\int_0^u \tilde{\lambda}(v)dv<\infty$ for all $u > 0$. Next, since $X_{T\cdot u}$ is $\filt_{T\cdot u}$-measurable for all $u \ge 0$, it follows that $Y_u$ is $\mathscr{G}_u$-adapted. To complete the proof, we need to show that for $0 \le v \le u$, $Y_u - Y_v$ is independent of $\mathscr{G}_v$ and is a Poisson random variable with parameter $\int_v^u \tilde{\lambda}(\tau)d\tau$. Since $X_{T\cdot u} - X_{T\cdot v}$ is independent of $\filt_{T\cdot v}$, it follows that $Y_u - Y_v$ is independent of $\mathscr{G}_v$. Finally, for $n \in \BZ^+$, we have
\begin{eqnarray}
\BP(Y_u - Y_v = n) &=& \BP(X_{T\cdot u}-X_{T\cdot v}=n) \nonumber \\
&=& \exp\left(-\int_{T\cdot v}^{T\cdot u} \lambda(s)ds\right) \frac{\left(\int_{T\cdot v}^{T\cdot u} \lambda(s)ds\right)^n}{n!} \nonumber\\
&=& \exp\left(-\int_v^u \tilde{\lambda}(\tau)d\tau\right) \frac{\left(\int_v^u \tilde{\lambda}(\tau)d\tau\right)^n}{n!} \enspace . \nonumber
\end{eqnarray}
where the last equality follows by making the change of variables $\tau = s/T$.
\end{proof}

\bibliographystyle{IEEEtran}
\bibliography{DecisionMaking,decision_graph,Extras}

\end{document}